\newtheorem{thm}{Theorem}[section]
\newtheorem{prop}[thm]{Proposition}
\newtheorem{lemma}[thm]{Lemma}
\newtheorem{cor}[thm]{Corollary}
\theoremstyle{definition}
\newtheorem{rmk}[thm]{Remark}
\newtheorem{eg}[thm]{Example}
\newtheorem{defn}[thm]{Definition}
\newenvironment{pf}{\begin{proof}}{\end{proof}}
\newcommand{\cA}{\ensuremath{\mathcal{A}}}
\newcommand{\C}{\ensuremath{\mathbb{C}}}
\newcommand{\F}{\ensuremath{\mathbb{F}}}
\newcommand{\bM}{\ensuremath{\mathbb{M}}}
\newcommand{\R}{\ensuremath{\mathbb{R}}}
\newcommand{\cR}{\ensuremath{\mathcal{R}}}
\newcommand{\Z}{\ensuremath{\mathbb{Z}}}
\newcommand{\iso}{\cong}
\newcommand{\la}{\lambda}
\newcommand{\bracket}[1]{ \langle #1 \rangle}
\newcommand{\cl}{\mathrm{cl}}
\newcommand{\map}{\rightarrow}
\DeclareMathOperator{\Ext}{Ext}
\newcommand{\cirrad}{0.15}
\newcolumntype{C}{>{$}c<{$}}
\newcolumntype{L}{>{$}l<{$}}
\newcolumntype{R}{>{$}r<{$}}
\newcolumntype{H}{>{\setbox0=\hbox\bgroup$}c<{$\egroup}@{}}
\begin{document}
\title{The $\eta$-inverted $\R$-motivic sphere}
\author{Bertrand J. Guillou}
\address{Department of Mathematics\\ University of Kentucky\\
Lexington, KY 40506, USA}
\email{bertguillou@uky.edu}

\author{Daniel C. Isaksen}
\address{Department of Mathematics\\ Wayne State University\\
Detroit, MI 48202, USA}
\email{isaksen@wayne.edu}
\thanks{The first author was supported by Simons Collaboration Grant 282316.
The second author was supported by NSF grant DMS-1202213.}

\subjclass[2000]{14F42, 55T15, 55Q45}

\keywords{motivic homotopy theory,
stable homotopy group,
$\eta$-inverted stable homotopy group,
Adams spectral sequence}

\begin{abstract}
We use an Adams spectral sequence to calculate the
$\R$-motivic stable homotopy groups after inverting $\eta$.
The first step is to apply a Bockstein spectral sequence in order
to obtain $h_1$-inverted $\R$-motivic $\Ext$ groups, which serve as the
input to the $\eta$-inverted $\R$-motivic Adams spectral sequence.
The second step is to analyze Adams differentials.
The final answer is that the Milnor-Witt $(4k-1)$-stem 
has order $2^{u+1}$, where $u$ is the 2-adic valuation of
$4k$.  This answer is reminiscent of the classical image of $J$.
We also explore some of the Toda bracket structure of the
$\eta$-inverted $\R$-motivic stable homotopy groups.
\end{abstract}

\date{\today}

\maketitle

\section{Introduction}

The first exotic property of motivic stable homotopy groups is that
the  Hopf map $\eta$ is not nilpotent.  This means that 
inverting $\eta$ can be useful for understanding the global structure
of motivic stable homotopy groups.

In \cite{AM} and \cite{GI}, 
the $\eta$-inverted $\C$-motivic 2-completed stable homotopy groups
$\hat{\pi}_{*,*}^\C [\eta^{-1}]$ were explicitly computed to be
\[ 
\F_2[\eta^{\pm 1}][\mu,\varepsilon]/\varepsilon^2.
\]
This result naturally suggests that one should study the structure
of $\eta$-inverted motivic stable homotopy groups over other fields.

In the present article, we consider the $\eta$-inverted $\R$-motivic
2-completed stable homotopy groups $\hat{\pi}_{*,*}^\R [\eta^{-1}]$.
Our main tool is the motivic Adams spectral sequence,
which  takes the form
\[ 
\Ext_{\cA^\R}(\bM_2^\R,\bM_2^\R)[h_1^{-1}] \Rightarrow \hat\pi^\R_{*,*} [\eta^{-1}].
\]
Here $\cA^\R$ is the $\R$-motivic Steenrod algebra, and
$\bM_2^\R$ is the motivic $\F_2$-cohomology of $\R$.
We will exhaustively compute this spectral sequence.

We start by computing the Adams $E_2$-page
$\Ext_{\cA^\R}(\bM^\R_2,\bM^\R_2)[h_1^{-1}]$
using the $\rho$-Bockstein spectral sequence \cite{H} \cite{DI}.
This spectral sequence takes the form
\[
\Ext_{\cA^\C} (\bM_2^\C, \bM_2^\C)[\rho] [h_1^{-1}] \Rightarrow 
\Ext_{\cA^\R} (\bM_2^\R, \bM_2^\R) [h_1^{-1}],
\]
where $\cA^\C$ is the $\C$-motivic Steenrod algebra and
$\bM_2^\C$ is the motivic $\F_2$-cohomology of $\C$.

The input to the $\rho$-Bockstein spectral sequence is completely
known from \cite{GI}.  
In order to deduce differentials, one first observes, as in \cite{DI},
that the groups
$\Ext_{\cA^\R}(\bM^\R_2,\bM^\R_2)[\rho^{-1}, h_1^{-1}]$ 
with $\rho$ and $h_1$ both inverted
are easy
to describe.  Then there is only one pattern of
$\rho$-Bockstein differentials that is consistent with this
$\rho$-inverted calculation.

Having obtained the Adams $E_2$-page
$\Ext_{\cA^\R}(\bM^\R_2,\bM^\R_2)[h_1^{-1}]$,
the next step is to compute Adams differentials.
The extension of scalars functor from $\R$-motivic homotopy
theory to $\C$-motivic homotopy theory induces a map
\[
\xymatrix{
\Ext_{\cA^\R}(\bM_2^\R,\bM_2^\R)[h_1^{-1}] \ar@=[r] \ar[d] & \hat{\pi}^\R_{*,*} [\eta^{-1}] \ar[d] \\
\Ext_{\cA^\C}(\bM_2^\C,\bM_2^\C)[h_1^{-1}] \ar@=[r] & \hat{\pi}^\C_{*,*} [\eta^{-1}]
} 
\]
of Adams spectral sequences.
The bottom Adams spectral sequence is completely understood
\cite{AM} \cite{GI}.
The Adams $d_2$ differentials in the top spectral sequence
can then be deduced by the comparison map.

This leads to a complete description of the 
$h_1$-inverted $\R$-motivic Adams $E_3$-page.  Over $\C$, it turns
out that the $h_1$-inverted Adams spectral sequence collapses at this point.
However, over $\R$, there are higher differentials that we deduce from
manipulations with Massey products and Toda brackets.

In the end, we obtain an explicit description of the
$h_1$-inverted $\R$-motivic Adams $E_\infty$-page, from which we 
can read off the $\eta$-inverted stable motivic homotopy groups over $\R$.

In order to state the result, we need a bit of terminology.
Because $\eta$ belongs to $\hat\pi^\R_{1,1}$, it makes sense to use a grading
that is invariant under multiplication by $\eta$.  The Milnor-Witt
$n$-stem is the direct sum $\bigoplus\limits_p \pi^\R_{p+n,p}$.  
Then multiplication by $\eta$ is an endomorphism 
of the Milnor-Witt $n$-stem.

\begin{thm}
\label{thm:intro}
\mbox{}
\begin{enumerate}
\item
The $\eta$-inverted Milnor-Witt $0$-stem is
$\Z_2 [\eta^{\pm 1}]$, where $\Z_2$ is the ring of 2-adic integers.
\item
If $k > 1$, then the $\eta$-inverted Milnor-Witt $(4k-1)$-stem
is isomorphic to $\Z / 2^{u+1} [ \eta^{\pm 1}]$ as a module over
$\Z_2 [\eta^{\pm 1}]$, where $u$ is the $2$-adic valuation of $4k$.
\item
The $\eta$-inverted Milnor-Witt $n$-stem is zero otherwise.
\end{enumerate}
\end{thm}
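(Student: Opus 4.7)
The plan is to execute the four-step program outlined in the introduction and then translate the resulting $h_1$-inverted $\R$-motivic Adams $E_\infty$-page into the stated Milnor-Witt-graded answer.

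First, I would pin down the Adams $E_2$-page $\Ext_{\cA^\R}(\bM_2^\R,\bM_2^\R)[h_1^{-1}]$ by running the $\rho$-Bockstein spectral sequence whose input, $\Ext_{\cA^\C}(\bM_2^\C,\bM_2^\C)[\rho][h_1^{-1}]$, is known from \cite{GI} to be of the form $\F_2[\rho][h_1^{\pm 1}][v_1^4,v_2,v_3,\dots]$ modulo explicit relations. The key trick, following \cite{DI}, is that after further inverting $\rho$ the answer becomes transparent (essentially the $\R$-realization picks out the classical image-of-$J$-like pattern), and only one family of $\rho$-Bockstein differentials is compatible with that $\rho$-inverted calculation. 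I would record these differentials and read off the $E_\infty$-page of the $\rho$-Bockstein spectral sequence as an $\F_2[\rho][h_1^{\pm 1}]$-module, resolving possible hidden extensions by the same $\rho$-inversion argument.

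Next I would compute the Adams $d_2$ by naturality with respect to base change to $\C$. The $\C$-motivic $h_1$-inverted Adams spectral sequence collapses at $E_2$ by \cite{AM},\cite{GI}, with $E_\infty$ equal to $\F_2[\eta^{\pm 1}][\mu,\varepsilon]/\varepsilon^2$. Comparing via the vertical maps in the square of spectral sequences displayed in the excerpt, every $\R$-motivic class whose $\C$-image is known to be a permanent cycle must itself be a permanent cycle modulo $\rho$-multiples, and conversely classes supported on $\rho$-towers are forced to differentiate. This should determine all the $d_2$ differentials, leaving a well-understood $E_3$-page.

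The main obstacle, and the technical heart of the argument, is the higher Adams differentials on the $\R$-motivic side, which are invisible to the $\C$-comparison. I would attack these by the standard Moss-convergence technique: represent $E_3$ classes as Massey products in $\Ext$, apply Moss's theorem to convert them into Toda brackets of lower-stem permanent cycles (the elements $\eta$, $\rho$, $2$, and the $\C$-motivic permanent cycles $\mu$ and $\varepsilon$), and then compute those Toda brackets in $\hat\pi^\R_{*,*}[\eta^{-1}]$ using known relations. Each surviving differential produces, via the Milnor-Witt grading $p+n=$ total degree minus weight, a class in exactly the $(4k-1)$-stem, so one can bookkeep the $\rho$-towers and their eventual truncations. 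The 2-adic valuation of $4k$ enters precisely as the length of the relevant $\rho$-tower that survives to $E_\infty$, producing the cyclic group $\Z/2^{u+1}[\eta^{\pm 1}]$.

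Finally, I would collect the surviving $h_1$-towers on $E_\infty$ and reindex into Milnor-Witt stems $\bigoplus_p \pi^\R_{p+n,p}$. Part (1) follows because in Milnor-Witt degree $0$ the only surviving tower is generated by the unit, giving $\Z_2[\eta^{\pm 1}]$ after 2-completion; parts (2) and (3) follow directly from the tower count in each Milnor-Witt degree, with vanishing in all degrees other than $0$ and $4k-1$.
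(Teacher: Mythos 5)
Your high-level outline (Bockstein with the $\rho$-inversion trick, $d_2$ by base change to $\C$, higher differentials via Massey/Toda brackets, then reindex) is the same program the paper executes, but several steps are either misstated or left too vague to carry through.

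First, a correctness issue in Step~2: you assert that the $\C$-motivic $h_1$-inverted Adams spectral sequence \emph{collapses at $E_2$}. It does not; its $E_2$-page is $\F_2[h_1^{\pm 1},P,v_2,v_3,\dots]$, and it has nontrivial $d_2$ differentials $d_2(P^k v_n)=P^k v_{n-1}^2$ for $n\geq 3$, collapsing only at $E_3$ with $E_3=E_\infty=\F_2[h_1^{\pm 1},P,v_2]/v_2^2$. If the $\C$-side really collapsed at $E_2$ there would be no $d_2$ to pull back, so your own Step~2 (deducing $\R$-motivic $d_2$'s by comparison) would be empty. The correct statement is that the comparison map determines $d_2$ on the $\R$-side modulo $\rho$-multiples, and a separate degree argument (Lemma~\ref{lem:noRho2} / Remark~\ref{rmk:noRho}) shows there are no $\rho$-divisible classes in the target bidegree, so the $d_2$'s import exactly.

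Second, your plan for the higher differentials is the real gap. You propose to ``represent $E_3$ classes as Massey products, apply Moss's theorem to convert them into Toda brackets, and then compute those Toda brackets using known relations,'' but this does not indicate how you would \emph{force} a differential to occur; Toda bracket computations alone give you detecting classes, not vanishing. The paper's actual mechanism (Lemmas~\ref{lem:rhoDiv}--\ref{lem:Adams-higher}) is indirect: one shows via a Massey product and Moss convergence that the class $P^{2^{n-1}(2j+1)}v_n^2$ detects a homotopy class that is a multiple of $\rho$; but there is nothing in lower Adams filtration for it to be a $\rho$-multiple \emph{of}, so it must detect $0$ and hence be the target of a differential; finally the chart shows only one possible source. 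Without this ``$\rho$-divisible so it must die'' argument, you have not produced the $d_3$ and $d_r$ differentials, and you would be stuck at $E_3$. I'd flag this as the ``technical heart'' you acknowledge but do not actually supply.

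Third, you omit the convergence issue entirely. Inverting $h_1$ in an Adams spectral sequence is dangerous: a priori there could be infinite towers of homotopy classes linked by hidden $\eta$-multiplications that are invisible in $\Ext_\R[h_1^{-1}]$. The paper handles this with a vanishing-line argument (Lemma~\ref{lem:Ext-finite}), showing that in each positive Milnor-Witt stem $\Ext_\R$ vanishes above a horizontal line and a line of slope~1; your proposal should at least mention why the $h_1$-inverted Adams spectral sequence converges to what you claim. Finally, the bookkeeping that the surviving $\rho$-tower has length $u+1$ (where $u=v_2(4k)$) needs the explicit form of the $E_\infty$-page (Table~\ref{tbl:Einfgen}), and your proposal gestures at this but does not derive it. The overall strategy is right; these are the places where a careful referee would stop you.
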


For degree reasons, the product structure on
$\hat{\pi}_{*,*}^\R [\eta^{-1}]$ is very simple.
However, there are many interesting Toda brackets.  
We explore much
of the 3-fold Toda bracket structure in this article.
In particular, we will show that all of $\hat{\pi}_{*,*}^\R [\eta^{-1}]$
can be constructed inductively via Toda brackets,
starting from just $2$ and the generator of the
Milnor-Witt 3-stem.

Theorem \ref{thm:intro}
gives a familiar answer.  These groups have the same order as the classical image of $J$.  One might expect a geometric proof that directly compares
the image of $J$ spectrum with the $\eta$-inverted $\R$-motivic sphere.
However, the higher structure of the image of $J$ and
of the $\eta$-inverted $\R$-motivic sphere do not coincide.  This
suggests that a direct geometric proof is not possible.
More specifically,
let $\rho_{15}$ be a generator of the classical
image of $J$ in the 15-stem, i.e., a class detected by $h_0^3 h_4$
in the Adams spectral sequence.
Then the Toda bracket $\langle \rho_{15}, \rho_{15}, 32 \rangle$
contains a generator of the image of $J$ in the $31$-stem, i.e.,
is detected by $h_0^{10} h_5$ in the Adams spectral sequence.
On the other hand, the analogous $\eta$-inverted
$\R$-motivic Toda bracket does not contain a generator
of the Milnor-Witt 31-stem, as shown in Remark \ref{rmk:bracketone}.

The calculation of the $\eta$-inverted $\R$-motivic homotopy groups
leads to questions about $\eta$-inverted motivic homotopy groups
over other fields.  We leave it to the reader to speculate on
the behavior of these $\eta$-inverted groups over other fields.


\section{Preliminaries}\label{sec:background}

\subsection{Notation}

We continue with notation from \cite{DI} as follows:
\begin{enumerate}
\item
$\bM^\R_2=\F_2[\tau,\rho]$ 
is the motivic cohomology of $\R$ with $\F_2$ coefficients, where $\tau$ and $\rho$ have bidegrees $(0,1)$ and $(1,1)$, respectively.
\item
$\bM_2^\C=\F_2[\tau]$ 
is the motivic cohomology of $\C$ with $\F_2$ coefficients, where $\tau$ has bidegree $(0,1)$.
\item
$\cA^\R$ is the mod 2 motivic Steenrod algebra over $\R$.
\item
$\cA^\C$ is the mod 2 motivic Steenrod algebra over $\C$.
\item
$\cA^\cl$ is the classical mod 2 Steenrod algebra.
\item
$\Ext_\R$ is the trigraded ring $\Ext_{\cA^\R}(\bM^\R_2,\bM^\R_2)$.
\item
$\Ext_\C$ is the trigraded ring $\Ext_{\cA^\C}(\bM_2^\C,\bM_2^\C)$.
\item
$\Ext_\cl$ is the trigraded ring $\Ext_{\cA^{\cl}} (\F_2, \F_2)$.
\item
$\cR=\F_2[\rho,h_1^{\pm 1}]$.
\item
$\hat{\pi}_{*,*}^\C$ is the motivic stable homotopy ring of the
2-completed motivic sphere spectrum over $\C$.
\item
$\hat{\pi}_{*,*}^\R$ is the motivic stable homotopy ring of the
2-completed motivic sphere spectrum over $\R$.
\item
The symbols $v_1^4$ and $P$ are used interchangeably for the
Adams periodicity operator.
\end{enumerate}

%
%

\subsection{Grading conventions}

We follow \cite{Istems} in grading $\Ext$
according to  $(s,f,w)$, where:
\begin{enumerate}
\item
$f$ is the Adams filtration, i.e., the homological degree.
\item
$s+f$ is the internal degree, i.e., corresponds to the
first coordinate in the bidegree of the Steenrod algebra.
\item
$s$ is the stem, i.e., the internal degree minus
the Adams filtration.
\item
$w$ is the weight.
\end{enumerate}

We will consider the groups $\Ext_\R[h_1^{-1}]$ in which
$h_1$ has been inverted. 
The degree of $h_1$ is $(1,1,1)$. 
As in \cite{GI}, for this purpose it is convenient to introduce the following gradings whose values are zero for $h_1$.
\begin{enumerate}
\item[(5)]
$mw=s-w$ is the Milnor-Witt degree.
\item[(6)]
$c=s+f-2w$ is the Chow degree.
\end{enumerate}

In order to avoid notational clutter, we will drop $h_1$ from the notation,
except in the formal statements of the main results.  Since $h_1$ is a unit,
no information is lost by doing this.  The correct powers of $h_1$
can always be recovered by checking degrees.

For example, in Lemma \ref{lem:Bock-diff} below, we claim that there
is a differential $d^\rho_3 (v_1^4) = \rho^3 v_2$ in the
$\rho$-Bockstein spectral sequence.
Strictly speaking, this formula is non-sensical because
$d^\rho_3 (v_1^4)$ has Adams filtration $5$ while
$v_2$ has Adams filtration $1$.  The correct full formula is 
$d^\rho_3(v_1^4) = \rho^3 h_1^4 v_2$.

If we are  to ignore multiples of $h_1$,  we must rely
on gradings that take value $0$ on $h_1$.
This explains our preference for Milnor-Witt degree $mw$ and Chow
degree $c$.

\section{The $\rho$-Bockstein spectral sequence}
\label{sctn:rho-Bock}

Recall \cite{H,DI} that the $\rho$-Bockstein spectral sequence takes the form
\[ \Ext_\C[\rho] \Rightarrow \Ext_\R.\]
After inverting $h_1$, by \cite[Theorem 1.1]{GI} this takes the form
\[ \cR[v_1^4,v_2,v_3,\dots] \Rightarrow \Ext_\R[h_1^{-1}],\]
where $\cR=\F_2[\rho,h_1^{\pm 1}]$.
Table \ref{tbl:BockE1Gens} lists the generators of the
Bockstein $E_1$-page.

\begin{table}[ht]
\captionof{table}{Bockstein $E_1$-page generators
\label{tbl:BockE1Gens}}
\begin{center}
\begin{tabular}{LL} 
\hline
(mw,c) & \text{generator} \\ \hline
(0,1) & \rho \\
(4,4) & v_1^4 \\
(3,1) & v_2 \\
(7,1) & v_3 \\
(15,1) & v_4 \\
(2^n-1,1) & v_n \\
 \hline
\end{tabular}
\end{center}
\end{table}

\begin{lemma}
\label{lem:Bock-diff}
In the $\rho$-Bockstein spectral sequence, there are differentials
$d^\rho_{2^n-1}(v_1^{2^n}) = \rho^{2^n-1} v_n$ for $n \geq 2$.
All other non-zero differentials follow from the Leibniz rule.
\end{lemma}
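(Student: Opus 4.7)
The plan is to follow the strategy outlined in the introduction: first independently describe the target $\Ext_\R[\rho^{-1}, h_1^{-1}]$ of the $\rho$-inverted Bockstein spectral sequence, and then argue that the claimed differentials constitute the unique pattern, compatible with the Leibniz rule, whose $E_\infty$-page matches this target after inverting~$\rho$.

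The first step is to identify $\Ext_\R[\rho^{-1}, h_1^{-1}]$. After inverting $\rho$, the motivic cohomology $\bM_2^\R = \F_2[\tau, \rho]$ becomes $\F_2[\tau, \rho^{\pm 1}]$, and a change-of-rings argument along the lines of \cite{DI} reduces the corresponding Ext calculation to essentially the base ring, yielding $\cR[\rho^{-1}]$ on the $h_1$-inverted part.  Localizing the $\rho$-Bockstein spectral sequence at $\rho$ thus produces a convergent spectral sequence with $E_1$-page $\cR[\rho^{\pm 1}][v_1^4, v_2, v_3, \dots]$ abutting to $\cR[\rho^{-1}]$, so every polynomial generator $v_1^{4k}$ and $v_n$ must eventually die.

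The second step is an induction on $n$ to determine the individual differentials.  For each $n \geq 2$, the class $v_n$ has bidegree $(mw, c) = (2^n-1, 1)$, and a degree check shows that the only monomial whose Bockstein differential can produce a $\rho$-multiple of $v_n$ on any page is $v_1^{2^n}$, in bidegree $(2^n, 2^n)$; the compatible shift in $\rho$-filtration then forces $d^\rho_{2^n-1}(v_1^{2^n}) = \rho^{2^n-1} v_n$.  In characteristic $2$, the Leibniz rule yields
\[
d^\rho_{2^n-1}(v_1^{2^{n+1}}) = 2 v_1^{2^n} \cdot d^\rho_{2^n-1}(v_1^{2^n}) = 0,
\]
so $v_1^{2^{n+1}}$ survives past the $E_{2^n-1}$-page and is available to support the next differential $d^\rho_{2^{n+1}-1}$, closing the induction.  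All further differentials on products are then determined by the Leibniz rule.

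The main obstacle I anticipate is the uniqueness step: one must rule out alternative targets for each $d^\rho_r(v_1^{2^n})$ and verify that no additional Leibniz-independent differentials arise among the monomials in $\cR[v_1^4, v_2, v_3, \dots]$.  This reduces to careful bookkeeping with the $(mw, c)$-bigrading (using that $\rho$ and $v_n$ lie in bidegrees $(0,1)$ and $(2^n-1, 1)$), together with the observation that once the proposed differentials have been applied the $\rho$-inverted $E_\infty$-page already collapses onto $\cR[\rho^{-1}]$, leaving no room for any further non-trivial differentials.
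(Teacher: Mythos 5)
Your proposal follows essentially the same strategy as the paper: invert $\rho$, identify the small target $\cR[\rho^{-1}]$ (via the base change to $\Ext_{\cl}$ from \cite{DI}), and argue that only one pattern of differentials in the $\rho$-inverted Bockstein spectral sequence is consistent with that target. The first two paragraphs set this up correctly and agree with the paper.

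The degree argument in your third paragraph, however, has two gaps. First and more importantly, you never establish that $v_n$ cannot \emph{support} a nonzero differential. You correctly observe that each $v_n$ "must eventually die," but dying can happen either by being hit or by supporting a differential; only the former forces a source. The paper handles this explicitly by noting there are no elements in the Milnor--Witt $(2^n-2)$-stem on the relevant page $E_{2^{n-1}}$, so that $d_r(\rho^k v_n)=0$ for all $r$ and $v_n$ is therefore forced into the image. Without this, the existence of the proposed differential is not yet established. Second, the claim that $v_1^{2^n}$ is, by a degree check alone, the unique monomial whose differential can hit a $\rho$-multiple of $v_n$ is not correct at the level of the full $E_1$-page: after inverting $\rho$, classes such as $\rho^{-3}v_2^3 v_3$ share Milnor--Witt degree $16$ with $v_1^{16}$ and could, by bidegree and $\rho$-filtration alone, map to $\rho^3 v_4$. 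These competitors must be excluded either via the Leibniz rule or, as the paper does, by working page by page on $E_{2^{n-1}} = \cR[\rho^{\pm 1}][v_1^{2^n}, v_n, v_{n+1},\dots]$, where the candidates have already been killed and the uniqueness of the source becomes a genuine one-line degree check. Finally, the char-$2$ Leibniz observation about $v_1^{2^{n+1}}$ only shows survival past the single page $E_{2^n-1}$, not persistence to $E_{2^{n+1}-1}$; that persistence drops out of the explicit computation of homology at each stage rather than from the Leibniz rule alone.
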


The first few examples of these differentials are
$d_3 ( v_1^4) = \rho^3 v_2$,
$d_7 ( v_1^8) = \rho^7 v_3$, and
$d_{15} ( v_1^{16}) = \rho^{15} v_4$.

\begin{proof}
Inverting $\rho$ induces a map
\[ \xymatrix{
\Ext_\C[h_1^{-1}][\rho] \ar@{=>}[r]^{\rho\text{-Bss}} \ar[d]_{\rho\text{-inv}} & \Ext_\R[h_1^{-1}] \ar[d]^{\rho\text{-inv}} \\
\Ext_\C[h_1^{-1}][\rho^{\pm 1}] \ar@{=>}[r]^{\rho\text{-Bss}} & \Ext_\R[h_1^{-1},\rho^{-1}]
}\]
of $\rho$-Bockstein spectral sequences.
We will establish differentials in the $\rho$-inverted
spectral sequence.  The map of spectral sequences then implies that
the same differentials occur when $\rho$ is not inverted.

Recall \cite[Theorem 4.1]{DI} that there is an 
isomorphism $\Ext_{\mathrm{cl}}[\rho^{\pm 1}] \iso \Ext_\R[\rho^{-1}]$ 
sending the classical element $h_0$ to the motivic element $h_1$. 
Using also that $\Ext_{\cl} [h_0^{-1}] = \F_2[h_0^{\pm 1}]$,
it follows that
$\Ext_\R[h_1^{-1},\rho^{-1}]$ is isomorphic to $\cR[\rho^{-1}]$.
Then the $\rho$-inverted $\rho$-Bockstein spectral sequence takes the form
\[
\xymatrix{
\cR[\rho^{-1}][v_1^4,v_2,v_3,\dots] \ar@{=>}[r]^-{\rho\text{-Bss}} & \cR[\rho^{-1}].
}
\]
Because the target of the 
$\rho$-inverted spectral sequence is very small,
essentially everything must either support a differential
or be hit by a differential.

The $\rho$-Bockstein differentials have degree $(-1,0)$ with respect
to the grading $(mw,c)$ used in Table \ref{tbl:BockE1Gens}.
The elements $\rho^k v_2$ cannot support  differentials because there are
no elements in the Milnor-Witt 2-stem.
The only possibility is that
after inverting $\rho$, there is a $\rho$-Bockstein differential
$d_3( v_1^4) = \rho^3 v_2$.

Then the $\rho$-inverted $E_4$-page is
$\cR[v_1^8, v_3, v_4, \ldots]$.
The elements $\rho^k v_3$ cannot support differentials because
the $\rho$-inverted $E_4$-page has no elements in Milnor-Witt 6-stem.
The only possibility is that 
after inverting $\rho$, there is a $\rho$-Bockstein differential
$d_7 ( v_1^8) = \rho^7 v_3$.

In general, the $\rho$-inverted $E_{2^{n-1}}$-page is
$\cR[v_1^{2^n}, v_n, v_{n+1}, \ldots]$.
The elements $\rho^{k} v_n$ cannot support  differentials
because the $\rho$-inverted $E_{2^{n-1}}$-page has no 
elements in the Milnor-Witt $(2^n-2)$-stem.
The only possibility is that 
after inverting $\rho$, there is a $\rho$-Bockstein differential
$d_{2^n -1 } ( v_1^{2^n}) = \rho^{2^n-1} v_n$.
\end{proof}

The $\rho$-Bockstein $E_\infty$-page can be directly computed
from the differentials
in Lemma \ref{lem:Bock-diff}.
We write $P$ rather than $v_1^4$. 

\begin{prop}
\label{prop:Bock-Einfty}
The 
$\rho$-Bockstein $E_\infty$-page is the
$\cR$-algebra   on the generators 
$P^{2^{n-1} k} v_n$ for $n \geq 2$ and $k \geq 0$ (see
Table~\ref{tbl:BockGens}),
subject to the relations 
\[
\rho^{2^n-1} P^{2^{n-1} k} v_n = 0
\]
for $n \geq 2$ and $k \geq 0$, and
\[
P^{2^{n-1}k}v_n\cdot P^{2^{m-1}j}v_m + P^{2^{n-1}(k+2^{m-n}j)}v_n\cdot v_m =0
\]
for $m \geq n \geq 2$, $k \geq 0$, and $j \geq 0$.
\end{prop}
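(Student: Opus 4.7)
My plan is to verify three things: (i) each proposed generator $P^{2^{n-1}k}v_n$ is a permanent cycle in the $\rho$-Bockstein spectral sequence, (ii) the proposed relations hold on $E_\infty$, and (iii) the resulting $\cR$-algebra exhausts $E_\infty$.

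For (i), I would first observe that $d^\rho_{2^m-1}(v_n) = 0$ for all $m,n\geq 2$ by a degree count: the target would lie in $(mw,c) = (2^n-2,1)$, and the grading formulas arising from Table \ref{tbl:BockE1Gens} admit no monomial in those bidegrees. Leibniz then reduces the vanishing of $d^\rho_{2^m-1}(P^{2^{n-1}k}v_n)$ to that of $d^\rho_{2^m-1}(P^{2^{n-1}k})\cdot v_n$. For $m\leq n$, applying the Koszul-type Leibniz rule on $E_{2^{m-1}}$ (where $P^{2^{n-1}k}$ is a power of the Koszul generator $P^{2^{m-2}}$) gives an overall coefficient of the form $2^{n-m+1}k$, which vanishes modulo $2$. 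For $m\geq n+1$, the output is a multiple of $\rho^{2^m-1}v_mv_n$, and the identity
\[
\rho^{2^m-1}v_n \;=\; \rho^{2^m-2^n}\cdot\rho^{2^n-1}v_n
\]
shows this equals zero on $E_{2^m-1}$, using the relation $\rho^{2^n-1}v_n=0$ established on $E_{2^n}$ via the differential $d^\rho_{2^n-1}(v_1^{2^n})=\rho^{2^n-1}v_n$.

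For (ii), the relation $\rho^{2^n-1}P^{2^{n-1}k}v_n=0$ follows at once from $\rho^{2^n-1}v_n=0$ by multiplication. The combining relation is the elementary identity $2^{n-1}k+2^{m-1}j=2^{n-1}(k+2^{m-n}j)$, valid for $m\geq n$.

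For (iii), I would compute $E_\infty$ by iterated Koszul cohomology. Set $B_n = \cR[v_2,v_3,\dots]/(\rho^3v_2,\rho^7v_3,\dots,\rho^{2^n-1}v_n)$. The differential $d^\rho_{2^n-1}$ on $E_{2^{n-1}}$ is the Koszul derivation determined by $P^{2^{n-2}}\mapsto \rho^{2^n-1}v_n$, and its cohomology splits into a ``main'' summand $B_n[P^{2^{n-1}}]$ together with ``auxiliary'' summands concentrated in odd powers of $P^{2^{n-2}}$, whose coefficients are the annihilator $\mathrm{Ann}_{B_{n-1}}(\rho^{2^n-1}v_n)$. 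The crucial lemma is that this annihilator equals the ideal $(v_2,\dots,v_{n-1})B_{n-1}$. Assembling across all $n$, the main summands account for monomials with no $P$-factor (subject to the relations $\rho^{2^n-1}v_n=0$), while the auxiliary summands produce exactly the mixed monomials $P^{2^{n-1}k}v_n$ paired with the various $v_j$-powers, reassembling via the combining relation into the claimed presentation. The main obstacle is the annihilator computation: to show $\mathrm{Ann}_{B_{n-1}}(\rho^{2^n-1}v_n)=(v_2,\dots,v_{n-1})B_{n-1}$, one lifts any annihilator element $b$ to $\cR[v_2,v_3,\dots]$, writes $\rho^{2^n-1}v_nb=\sum_{j<n}\rho^{2^j-1}v_jg_j$, expands both sides as polynomials in $v_n$ using the algebraic independence of $v_n$ from the earlier $v_j$'s, and tracks $\rho$-powers coefficient by coefficient to conclude that $b$ lies in $(v_2,\dots,v_{n-1})$. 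Once this step is in place, the inductive Koszul cohomology assembles into the claimed $\cR$-algebra.
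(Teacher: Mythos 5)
The paper leaves this result with essentially no proof; it just says before the Proposition that ``the $\rho$-Bockstein $E_\infty$-page can be directly computed from the differentials in Lemma \ref{lem:Bock-diff}.'' So you are filling in an omitted argument rather than rederiving one. Your method (iterating one Koszul cohomology computation per Bockstein page) is precisely the mechanism the paper makes explicit later, in the proof of Proposition \ref{prop:Adams-E3}, where the $E_2$-page is decomposed into sub-differential-graded pieces, each handled by Lemma \ref{lem:basic}. So the route is the right one. Two points of care are needed.

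First, your description of the single Koszul step is not quite accurate once $n\geq 4$. You write that ``the differential $d^{\rho}_{2^n-1}$ on $E_{2^{n-1}}$ is the Koszul derivation \dots and its cohomology splits into a main summand $B_n[P^{2^{n-1}}]$ together with auxiliary summands \dots whose coefficients are $\mathrm{Ann}_{B_{n-1}}(\rho^{2^n-1}v_n)$.'' That would be the Koszul cohomology formula applied to $B_{n-1}[P^{2^{n-2}}]$. But $E_{2^{n-1}}$ is \emph{not} $B_{n-1}[P^{2^{n-2}}]$ when $n\geq 4$: it already carries the auxiliary summands generated at earlier stages (for instance, $P^{2}v_2$ and $P^{6}v_2$ live in $E_8$, but not in $B_3[P^4]$, since $2$ and $6$ are not multiples of $4$). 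What is true is that $E_{2^{n-1}}$ splits as $B_{n-1}[P^{2^{n-2}}]$ plus the previously accrued auxiliary summands; that $d^\rho_{2^n-1}$ is identically zero on the accrued auxiliaries (because each is $\rho^{2^j-1}$-torsion for some $j<n$, which is killed by the factor $\rho^{2^n-1}$ appearing in the differential, and similarly the image of $d^\rho_{2^n-1}$ misses them); and that the Koszul cohomology formula therefore applies to the summand $B_{n-1}[P^{2^{n-2}}]$ alone, contributing one new auxiliary summand. This is the honest inductive hypothesis you want, and with it your computation goes through. Your annihilator claim $\mathrm{Ann}_{B_{n-1}}(\rho^{2^n-1}v_n)=(v_2,\dots,v_{n-1})B_{n-1}$ is correct and your outlined argument for it is fine.

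Second, parts (i) and (ii) of your plan are largely superfluous once (iii) is carried out in full, since the direct computation of $E_\infty$ already identifies the surviving classes and their products; in particular, a permanent-cycle argument only shows classes are not killed by outgoing differentials, not that they fail to be boundaries, so (i) alone would not pin down $E_\infty$. The content is entirely in (iii), where after computing $E_\infty$ as an $\cR$-module you should also explicitly note that the $\cR$-algebra map from the presented algebra to $E_\infty$ is a bijection on the monomial basis of Remark \ref{rmk:Einfty-basis}.
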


\begin{table}[ht]
\captionof{table}{Bockstein $E_\infty$-page generators}
\label{tbl:BockGens}
\begin{center}
\begin{tabular}{LLL} 
\hline
(mw,c) & \text{generator} &  \text{$\rho$-torsion} \\ \hline
(0,1) & \rho & \infty \\
(0,0) & h_1 & \infty \\
 (3,1)+k(8,8) & P^{2k}v_2 &3  \\
 (7,1)+k(16,16) & P^{4k}v_3 & 7  \\
(15,1)+k(32,32) & P^{8k}v_4 &  {15}  \\
(2^n-1,1)+k(2^{n+1},2^{n+1}) & P^{2^{n-1}k}v_n &   {2^n-1}  \\
 \hline
\end{tabular}
\end{center}
\end{table}


\begin{rmk} 
\label{rmk:Einfty-basis}
In practice, the relations mean that all $P$'s can be shifted onto the $v_n$ with minimal $n$ in any monomial. 
Thus an 
$\cR$-module basis is given by monomials of the form $P^{2^{n-1} k} v_n \cdot v_{m_1}\cdots v_{m_a}$, where $n\leq m_1\leq \dots\leq m_a$.
For example,
\begin{align*}
P^2 v_2 \cdot P^4 v_2 & = P^6 v_2 \cdot v_2 \\
P^4 v_2 \cdot P^8 v_3 & = P^{12} v_2 \cdot v_3 \\
P^4 v_3 \cdot P^{48} v_5 & = P^{52} v_3 \cdot v_5. 
\end{align*}
\end{rmk}

\section{The Adams $E_2$-page}
\label{sctn:Adams-E2}

Having obtained the $\rho$-Bockstein $E_\infty$-page
in Section \ref{sctn:rho-Bock}, our next task
is to consider hidden extensions in $\Ext_\R [h_1^{-1}]$.
We will show that what you see (in the Bockstein $E_\infty$ page) is what you get (in $\Ext$), as there are no hidden relations. 
This will require some careful analysis of degrees, as well as some
manipulations with Massey products.

\begin{lemma}\label{lem:noRho}
For each $n \geq 2$ and $k \geq 0$, the
element $P^{2^{n-1} k} v_n$ of the Bockstein $E_\infty$-page 
detects a unique element of $\Ext_\R[h_1^{-1}]$.
\end{lemma}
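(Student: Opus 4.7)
The plan is to establish uniqueness by showing the $\rho$-Bockstein $E_\infty$-page has no nonzero element of strictly positive $\rho$-filtration in the tridegree of $P^{2^{n-1}k}v_n$. Since the filtration of $\Ext_\R[h_1^{-1}]$ induced by the spectral sequence is the $\rho$-adic filtration and $P^{2^{n-1}k}v_n$ sits in filtration zero, this vanishing would imply that the lift to $\Ext_\R[h_1^{-1}]$ is uniquely determined.

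Using the $\cR$-basis of Remark \ref{rmk:Einfty-basis}, any higher-filtration contribution in the tridegree of $P^{2^{n-1}k}v_n$ has the form $\rho^i \cdot B'$ with $i \geq 1$ and $B' = P^{2^{m-1}l}v_m \cdot v_{p_1}\cdots v_{p_a}$ a basis monomial ($2 \leq m \leq p_1 \leq \cdots \leq p_a$). Since $B'$ is $\rho^{2^m-1}$-torsion, it suffices to rule out $i$ in the range $1 \leq i \leq 2^m - 2$. Matching $(mw, c)$-degrees against $P^{2^{n-1}k}v_n$ produces the Diophantine system
\[
2^m(1+2l) + \sum_{j=1}^a 2^{p_j} = 2^n(1+2k) + a, \qquad i = 2^{n+1}k - 2^{m+1}l - a.
\]

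The key step is a modular argument. For $m \leq n$, reducing the $mw$-equation modulo $2^{m+1}$ and tracking the number $b$ of indices $p_j$ equal to $m$ forces $a$ into the residue classes $\{0, 2^m\} \pmod{2^{m+1}}$, and hence $i$ into the same two classes. Neither class meets $[1, 2^m-2]$. For $m > n$, the analogous reduction modulo $2^{n+1}$ forces $a \geq 2^n$; combining this with the size bound $\sum_j 2^{p_j} \geq a \cdot 2^m$ and the $mw$-equation then yields $i \geq 2^m - 1$, making $\rho^i B'$ vanish at $E_\infty$ by the torsion relation.

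The main obstacle is the $m > n$ case, where the modular argument alone does not kill $i$; it must be combined with a size estimate showing that the forced lower bound on $a$ together with $p_j \geq m$ pushes $i$ beyond the $\rho$-torsion threshold of $B'$. Once both ranges of $m$ are handled, the absence of higher $\rho$-filtration in the tridegree of $P^{2^{n-1}k}v_n$ yields the desired uniqueness.
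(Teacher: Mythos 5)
Your argument is correct and reaches the same conclusion, but takes a genuinely different route from the paper. The paper splits on the number of Bockstein basis factors $v_{m_i}$ in a potential $\rho$-divisible collision: it dispatches the single-factor case by the Milnor-Witt equation alone, and for $a\geq 2$ factors it first proves $n\geq m_a$ via the auxiliary quantity $u=mw-c$, and then runs an ascending sequence of congruences mod $4$, mod $8$, \dots{} which, together with the $\rho$-torsion bounds $\rho^{2^{m_1}-1}P^{\cdot}v_{m_1}=0$, successively pushes $m_1$ (and hence the modulus) upward, so that $b>0$ forces $b$ and $m_1$ arbitrarily large. You instead split on whether the minimal index $m$ of the collision monomial is $\leq n$ or $>n$, never needing the $u$-argument or the iteration: for $m\leq n$ a single reduction of the $mw$-equation mod $2^{m+1}$ pins $i\equiv -a \pmod{2^{m+1}}$ into $\{0,2^m\}$, which misses $[1,2^m-2]$; for $m>n$ the mod-$2^{n+1}$ reduction forces $a\geq 2^n$, and then the crude estimate $\sum_j 2^{p_j}\geq a\cdot 2^m$ in the $mw$-equation gives $i\geq 2^m-1$, so $\rho^iB'$ dies by the torsion relation. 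Your approach is somewhat more self-contained (no auxiliary degree $u$, no unbounded induction on the modulus and no verbal ``continue this argument'' step), at the cost of having to treat the $m>n$ regime by a separate size estimate; the paper's $u$-degree step eliminates $m>n$ up front but then needs the iterative modulus-climbing. I checked the details of both of your cases (including the interaction of the boundary subcase $m=n$, and the inequality $2^{n+m}\geq 3\cdot 2^n-1$ that finishes the $m>n$ size bound), and they go through.
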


\begin{pf}
We need to show that $P^{2^{n-1} k} v_n$ does not share bidegree
with an element that is divisible by $\rho$.

First suppose that $P^{2^{n-1} k} v_n$ has the same bidegree
as $\rho^b P^{2^{m-1} j} v_m$.
Then
\[
(2^n-1,1) + k(2^{n+1}, 2^{n+1} ) = (2^m-1,1) + j(2^{m+1}, 2^{m+1}) + b(0,1).
\]
Considering only the Milnor-Witt degree, we have
\[
2^n ( 2k+1) = 2^m (2j+1).
\]
Therefore, $n = m$ and $k = j$, so $b = 0$.

Now suppose that $P^{2^{n-1} k} v_n$ shares bidegree with some
element $x$.  By Remark \ref{rmk:Einfty-basis}, we may assume that
$x$ is of the form 
$\rho^b P^{2^{m_1-1} j}v_{m_1} \cdot v_{m_2} \cdots v_{m_a}$,
where $m_1\leq m_2\leq \dots \leq m_a$.
We may also assume that $b \leq 2^{m_1} - 2$, since
$\rho^{2^{m_1} - 1} P^{2^{m-1} j} v_{m_1} = 0$.
Because of the previous paragraph, we may assume that $a \geq 2$.
We wish to show that $b = 0$.

We first show that $n \geq m_a$. 
Let $u(x)$ be the difference $mw - c$.
We have $u(P^{2^{m_1-1} j} v_{m_1})=2^{m_1}-2$ and $u(\rho)=-1$.
Since $b \leq 2^{m_1}-2$, it follows that 
$u(\rho^b P^{2^{m_1-1} j}v_{m_1})\geq 0$. Thus
\[ 2^n-2 =  u(P^{2^{n-1}k} v_n) = 
u( \rho^b P^{2^{m_1-1}j}v_{m_1}) + u( v_{m_2} \cdots v_{m_a}) 
\geq u( v_{m_a}) = 2^{m_a}-2,\]
so that $n\geq m_a$.

Now consider the Milnor-Witt and Chow degrees modulo $4$. 
We have 
\[ (-1, 1) \equiv (-a, a+b) \pmod4, 
\]
so $a\equiv 1\pmod4$  and $b\equiv 0\pmod{4}$. 
Thus either $b=0$, which was what we wanted to show, or $b\geq 4$.

We may now assume that $b \geq 4$.
Since $\rho^4 P^{2j} v_2 = 0$, we must have 
$m_1\geq 3$, so that all $m_i$, and also $n$, are at least $3$.

Next, consider degrees modulo 8. 
Comparing degrees gives
\[ (-1, 1) \equiv (-a , a+b) \pmod8.\]
Thus $b\equiv 0 \pmod 8$, so that $b\geq 8$. 
Since $\rho^8 P^{4j} v_3 = 0$,
we must have that $j_1$, and therefore also $n$ and all other $j_i$, 
to be at least $4$.
This argument can be continued to establish that $b$ and $n$ must be arbitrarily large under the assumption that $b>0$.
\end{pf}

\begin{rmk}
\label{rmk:Ext-names}
Lemma \ref{lem:noRho} allows us to use unambiguously the same notation
$P^{2^{n-1} k} v_n$ for an element of $\Ext_\R[h_1^{-1}]$.
\end{rmk}

\begin{lemma}\label{lem:noRho2}
For each $n \geq 2$ and $k \geq 0$, 
the element 
$P^{2^{n-1} k} v_n \cdot v_n$
of the Bockstein $E_\infty$-page 
detects a unique element of $\Ext_\R [h_1^{-1}]$.
\end{lemma}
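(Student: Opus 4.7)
The approach directly parallels the proof of Lemma \ref{lem:noRho}. The target $P^{2^{n-1}k}v_n\cdot v_n$ has Milnor-Witt and Chow bidegrees
\[
(mw,c) = (2^{n+1}-2 + k\cdot 2^{n+1},\; 2 + k\cdot 2^{n+1}).
\]
By Remark \ref{rmk:Einfty-basis}, my task is to show that no $\cR$-basis element of the form
\[
x = \rho^b \cdot P^{2^{m_1-1}j}v_{m_1}\cdot v_{m_2}\cdots v_{m_a},
\]
with $m_1 \leq \cdots \leq m_a$, $a \geq 1$, and $b \leq 2^{m_1}-2$, shares this bidegree unless $b=0$.

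The first observation is that $a$ must be even, and in particular $a \geq 2$, because the target has even Milnor-Witt degree while $mw(x) \equiv -a \pmod 2$. This differs from the setup of Lemma \ref{lem:noRho}, whose target had odd Milnor-Witt degree. Next I bound the $m_i$ using the invariant $u = mw - c$: the factor $\rho^b P^{2^{m_1-1}j}v_{m_1}$ contributes $u$-value $2^{m_1} - 2 - b \geq 0$, so
\[
2^{n+1} - 4 = u(\text{target}) = u(x) \geq u(v_{m_a}) = 2^{m_a} - 2,
\]
forcing $m_a \leq n$, and hence $m_i \leq n$ for all $i$.

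The main step is then an iterative modular arithmetic argument. Modulo $2^r$ (with $r \leq n+1$), the target reduces to $(mw, c) \equiv (-2,\; 2)$, while $x$ reduces to $(-a,\; a+b)$ provided $m_1 \geq r$ (so that all $2^{m_i}$ and $2^{m_1+1}j$ vanish mod $2^r$). Matching yields $a \equiv 2$ and $b \equiv 0 \pmod{2^r}$. Beginning at $r = 2$ and iterating: if $b \neq 0$, then $b \geq 2^r$ combined with $b \leq 2^{m_1}-2$ forces $m_1 \geq r+1$, enabling the mod $2^{r+1}$ comparison at the next stage. Since $m_1 \leq n$ is a fixed bound, the iteration must terminate in a contradiction, so $b = 0$.

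The main obstacle, as in Lemma \ref{lem:noRho}, is simply keeping careful track of the divisibility hypotheses at each inductive step, namely that all $m_i \geq r$ and that the periodicity terms $2^{n+1}k$ and $2^{m_1+1}j$ vanish modulo $2^r$. These hypotheses are preserved throughout the induction, so the argument proceeds identically to the corresponding step of Lemma \ref{lem:noRho}.
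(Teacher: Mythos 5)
Your proof is correct and is essentially the paper's own argument spelled out: you rule out the case $a=1$ via the parity of the Milnor--Witt degree, bound $m_a \leq n$ via $u = mw - c$, and drive $b$ to zero via the iterated congruences $(-2,2)\equiv(-a,a+b)$ modulo increasing powers of $2$, exactly as the paper's proof indicates by pointing back to Lemma~\ref{lem:noRho}. The only cosmetic difference is that you phrase the parity step as ``$a$ must be even'' rather than treating the $a=1$ case separately, but this is equivalent.
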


\begin{proof}
The Milnor-Witt degree of
$P^{2^{n-1} k} v_n \cdot v_n$ is even, while the 
Milnor-Witt degree of $\rho^b P^{ 2^{m-1} j} v_m$ is odd.
Therefore, these elements cannot share bidegree.

Now suppose that the element
$P^{2^{n-1} k} v_n \cdot v_n$ has the same bidegree as the element
$\rho^b P^{2^{m_1 -1} j} v_{m_1} \cdot v_{m_2} \cdots v_{m_a}$, with
$m_1 \leq m_2 \leq \cdots \leq m_a$, $b \leq 2^{m_1} - 2$, and $a \geq 2$.
The rest of the proof is essentially the same as the proof
of Lemma \ref{lem:noRho}.
Consider $u = mw - c$ to get that $n \geq m_a$.
Then consider congruences $(-2,2)\equiv (-a,a+b)$ modulo higher and
higher powers of $2$ to obtain that $b = 0$.
\end{proof}

\begin{rmk}
The obvious generalization of Lemma \ref{lem:noRho2} to elements 
of the form $P^{2^{n-1} k } v_n \cdot v_m$ is false.
For example,
$P^2 v_2 \cdot v_5$ has the same degree as
$\rho^4 v_3^6$.
\end{rmk}

\begin{rmk}
\label{rmk:noRho}
Lemmas \ref{lem:noRho} and \ref{lem:noRho2}
are equivalent to the claim that there are no $\rho$ multiples in 
the $\rho$-Bockstein $E_\infty$-page in the same bidegrees as
$P^{2^{n-1} k } v_n$ or $P^{2^{n-1} k} v_n \cdot v_n$.
This implies that there are also no $\rho$ multiples in
$\Ext_\R [h_1^{-1}]$ that share bidegree with these elements;
we will need this fact later.
The point is that the $\rho$-Bockstein spectral sequence does not
allow for hidden extensions by $\rho$.
\end{rmk}

\begin{lemma}
\label{lem:Einfty-vanish}
$\Ext_\R [h_1^{-1}]$ is zero 
when the Milnor-Witt stem $mw$ and the Chow degree $c$ are both
equal to $2i$ with $i \geq 1$.
\end{lemma}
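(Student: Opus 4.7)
The plan is to show that the $\rho$-Bockstein $E_\infty$-page described in Proposition~\ref{prop:Bock-Einfty} already vanishes in bidegree $(mw, c) = (2i, 2i)$ for every $i \geq 1$. Since the $h_1$-inverted $\rho$-Bockstein spectral sequence converges to $\Ext_\R[h_1^{-1}]$, a bidegree in which the $E_\infty$-page is zero is automatically a bidegree in which $\Ext_\R[h_1^{-1}]$ is zero. So this reduces the statement to a purely combinatorial check on basis monomials.

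By Remark~\ref{rmk:Einfty-basis}, together with the torsion relation $\rho^{2^n - 1} P^{2^{n-1}k} v_n = 0$, an $\F_2$-basis for the $E_\infty$-page consists of the pure monomials $h_1^j \rho^b$ ($j \in \Z$, $b \geq 0$) together with elements of the form
\[ h_1^j \rho^b \cdot P^{2^{n-1}k} v_n \cdot v_{m_1} \cdots v_{m_a}, \]
where $j \in \Z$, $k \geq 0$, $a \geq 0$, $2 \leq n \leq m_1 \leq \cdots \leq m_a$, and $0 \leq b \leq 2^n - 2$. Since $h_1$ has bidegree $(0,0)$ in $(mw, c)$, it may be suppressed for this analysis. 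A direct additive computation gives
\[ mw = (2^n - 1) + k \cdot 2^{n+1} + \sum_{i=1}^a (2^{m_i} - 1), \qquad c = 1 + k \cdot 2^{n+1} + a + b \]
on the generic basis element, and $(mw, c) = (0, b)$ on the pure $\rho$-power $\rho^b$.

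I would then impose $mw = c$. On the generic element, this simplifies to
\[ b = (2^n - 2) + \sum_{i=1}^a (2^{m_i} - 2). \]
Because each summand $2^{m_i} - 2$ is at least $2$ and the torsion bound forces $b \leq 2^n - 2$, this equation forces $a = 0$ and $b = 2^n - 2$. The only surviving candidate is therefore $\rho^{2^n - 2} P^{2^{n-1}k} v_n$, which has $mw = 2^n - 1 + k \cdot 2^{n+1}$. This integer is odd for every $n \geq 2$ and $k \geq 0$, so it cannot equal $2i$. The pure $\rho$-powers have $mw = 0$, which also cannot equal $2i$ for $i \geq 1$. Hence no basis element sits in bidegree $(2i, 2i)$, and the lemma follows.

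The argument is essentially a parity observation backed by the $\rho$-torsion bound, and I do not anticipate any substantive obstacle; the only care needed is in correctly enumerating the $\cR$-module basis and in tracking the constraint $b \leq 2^n - 2$ coming from the smallest $v$-index appearing in a monomial.
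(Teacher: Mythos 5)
Your proof is correct, and it is essentially the same degree-count argument as the paper's, just run at the $E_\infty$-page rather than the $E_1$-page. The paper instead observes that the $E_1$-page elements on the line $mw=c=2i$ are precisely products of $v_1^{4}$ and $\rho^{2^n+2^m-4}v_nv_m$, the former of which support Bockstein differentials and the latter of which are killed by the $\rho$-torsion relations; your version enumerates the explicit $\cR$-module basis from Remark~\ref{rmk:Einfty-basis}, derives the identity $b=(2^n-2)+\sum(2^{m_i}-2)$, and closes with the torsion bound and a parity observation. Both are valid combinatorial checks in the same grading; yours is perhaps slightly more systematic since it works directly with the known $E_\infty$-page basis, while the paper's is slightly more economical.
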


\begin{proof}
Under the condition
$mw = c = 2i$,
inspection of Table \ref{tbl:BockE1Gens} shows that
the $\rho$-Bockstein $E_1$-page consists
of products of elements of the form $v_1^4$ or $\rho^{2^n+2^m-4} v_n v_m$.
In the $E_\infty$-page,
$\rho^{2^n+2^m-4} v_n v_m = 0$ since
$\rho^{2^n-1} v_n =0$.
Also, $v_1^{4k}$ supports a differential for all $k \geq 0$.
\end{proof}

\begin{lemma}\label{lem:ExtMassey} 
For each $n\geq 2$, $k\geq 1$ and $m > n$, we have a Massey product 
\[ 
P^{2^{n-1}k + 2^{m-2} }v_n = 
\langle \rho^{2^m-2^n}v_{m},\rho^{2^n-1}, P^{2^{n-1}k}v_n \rangle
\]
in $\Ext_\R[h_1^{-1}]$ with no indeterminacy.
\end{lemma}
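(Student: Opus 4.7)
The proof has three main parts: verifying the Massey product is defined, computing its value, and ruling out indeterminacy.

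For definedness, $\rho^{2^m-2^n}v_m\cdot\rho^{2^n-1} = \rho^{2^m-1}v_m = 0$ in $\Ext_\R[h_1^{-1}]$ by the $\rho$-torsion relation in Proposition \ref{prop:Bock-Einfty} combined with the absence of hidden $\rho$-extensions noted in Remark \ref{rmk:noRho}; similarly $\rho^{2^n-1}\cdot P^{2^{n-1}k}v_n = 0$. A direct bidegree calculation, using that a triple Massey product shifts Milnor-Witt degree by $+1$ and preserves the Chow degree, confirms that $P^{2^{n-1}k+2^{m-2}}v_n$ lies in the expected bidegree. Note also that $2^{n-1}k + 2^{m-2} = 2^{n-1}(k + 2^{m-n-1})$ since $m > n$, so $P^{2^{n-1}k+2^{m-2}}v_n$ is indeed one of the standard generators covered by Lemma \ref{lem:noRho}.

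The key computational input is the identity $P^{2^{m-2}} = v_1^{2^m}$, so the claimed answer equals $v_1^{2^m}\cdot P^{2^{n-1}k}v_n$. The $\rho$-Bockstein differential $d^\rho_{2^m-1}(v_1^{2^m}) = \rho^{2^m-1}v_m$ from Lemma \ref{lem:Bock-diff} says that, in the $\rho$-filtered cobar complex computing $\Ext_\R$, a cobar lift $V_m$ of $v_1^{2^m}$ satisfies $dV_m = \rho^{2^m-1}\widetilde v_m$ modulo higher $\rho$-filtration, where $\widetilde v_m$ is a cobar lift of $v_m$. By Leibniz applied to $d^\rho_{2^n-1}(v_1^{2^n}) = \rho^{2^n-1}v_n$, a lift $W$ of $v_1^{2^n}\cdot P^{2^{n-1}k}$ satisfies $dW = \rho^{2^n-1}\widetilde v_n\cdot\widetilde{P^{2^{n-1}k}}$ modulo higher $\rho$-filtration. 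Then the cobar cocycle $V_m\cdot\widetilde{P^{2^{n-1}k}v_n} + \rho^{2^m-2^n}\widetilde v_m\cdot W$ is a chain-level representative of the Massey product, whose leading $\rho$-filtration-$0$ term is $V_m\cdot\widetilde{P^{2^{n-1}k}v_n}$, detecting $v_1^{2^m}\cdot P^{2^{n-1}k}v_n = P^{2^{n-1}k+2^{m-2}}v_n$ on the Bockstein $E_\infty$-page. By Lemma \ref{lem:noRho} and Remark \ref{rmk:noRho}, this uniquely identifies the Massey product in $\Ext_\R[h_1^{-1}]$.

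For the absence of indeterminacy, it suffices to show that in the bidegree of the Massey product, both $\rho^{2^m-2^n}v_m\cdot\Ext_\R[h_1^{-1}]$ and $\Ext_\R[h_1^{-1}]\cdot P^{2^{n-1}k}v_n$ vanish. This is a degree-parity analysis parallel to the proofs of Lemmas \ref{lem:noRho} and \ref{lem:noRho2}, using the standard basis of Remark \ref{rmk:Einfty-basis} and congruence arguments modulo successive powers of $2$ on Milnor-Witt and Chow degrees.

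The main obstacle is justifying that the second summand $\rho^{2^m-2^n}\widetilde v_m\cdot W$ does not perturb the detected class. Although $v_1^{2^n}\cdot v_m$ supports a nontrivial $\rho$-Bockstein differential in isolation, within the combined Massey product cocycle this contribution lives at strictly higher $\rho$-filtration ($2^m - 2^n \geq 1$) than the leading term, so it can only affect the target class by $\rho$-multiples, and Remark \ref{rmk:noRho} rules these out.
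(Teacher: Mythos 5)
Your proposal follows essentially the same route as the paper: the value of the Massey product comes from the $\rho$-Bockstein differential $d^\rho_{2^m-1}(P^{2^{m-2}})=\rho^{2^m-1}v_m$, and uniqueness of the resulting $\Ext$-class is Lemma \ref{lem:noRho}. The difference is that where the paper simply invokes May's convergence theorem \cite[Theorem~4.1]{May} and observes that there are no crossing differentials (all classes live in nonnegative $\rho$-filtration), you unroll that theorem by hand in the cobar complex. Your explicit cocycle $V_m\cdot\widetilde{P^{2^{n-1}k}v_n}+\rho^{2^m-2^n}\widetilde v_m\cdot W$ is exactly the representative May's theorem produces, and your remark that the second summand sits in $\rho$-filtration $\geq 2^m-2^n\geq 1$ is the right observation, but it addresses only how the class is detected, not whether the filtered cocycle can be corrected to a genuine cocycle; that closure is precisely what the no-crossing-differentials hypothesis of May's theorem guarantees, and you should appeal to it rather than reprove it informally.

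The more substantive gap is the indeterminacy. You say it follows from ``a degree-parity analysis parallel to the proofs of Lemmas \ref{lem:noRho} and \ref{lem:noRho2},'' but those lemmas show that certain specific $E_\infty$-classes do not share bidegree with any $\rho$-multiple. What the indeterminacy actually requires is that $\Ext_\R[h_1^{-1}]$ be identically zero in the bidegrees $(mw,c)=(2^{n+1}k+2^n,\,2^{n+1}k+2^n)$ and $(2^m,2^m)$ which would be multiplied against $\rho^{2^m-2^n}v_m$ and $P^{2^{n-1}k}v_n$ respectively. Both of these are of the form $(2i,2i)$ with $i\geq 1$, and the needed vanishing is exactly the content of Lemma \ref{lem:Einfty-vanish}, whose proof (examining which monomials can have $mw=c=2i$ on the Bockstein $E_1$-page) is genuinely different from the congruence-mod-$2^j$ arguments of Lemmas \ref{lem:noRho} and \ref{lem:noRho2}. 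So replace your gesture with a citation of Lemma \ref{lem:Einfty-vanish}; as written, the indeterminacy step would not go through by imitating the $\rho$-multiple lemmas.
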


\begin{pf} 
The Massey product formula follows from the Bockstein differential 
$d^\rho_{2^{m}-1}(P^{2^{m-2}}) = \rho^{2^m-1}v_m$ 
and May's convergence theorem \cite[Theorem~4.1]{May}. 
There are no crossing Bockstein differentials as all classes are in nonnegative $\rho$-filtration.

The indeterminacy of this bracket is generated by products of the form
$\rho^{2^m-2^n} v_{m} \cdot x$ and
$y \cdot P^{2^{n-1} k} v_n$,
where $x$ and $y$ have appropriate bidegrees.
We showed in Lemma \ref{lem:Einfty-vanish} that 
$0$ is the only possibility for $x$ or $y$.
\end{pf}

\begin{rmk}
\label{rmk:uniqueness}
Lemma \ref{lem:ExtMassey} gives many different Massey products for the same
element.  For example,
\begin{align*}
P^8 v_2 & = \langle \rho^4 v_3, \rho^3, P^6 v_2 \rangle \\
P^8 v_2 & = \langle \rho^{12} v_4, \rho^3, P^4 v_2 \rangle \\
P^8 v_2 & = \langle \rho^{28} v_5, \rho^3, v_2 \rangle.
\end{align*}
May's convergence theorem only says that these Massey products are detected
by the same element of the $\rho$-Bockstein $E_\infty$-page.
By Lemma \ref{lem:noRho}, this $E_\infty$-page element detects
just one element of $\Ext_\R [h_1^{-1}]$, so all of the Massey
products must in fact be equal.
\end{rmk}

\begin{lemma}
\label{lem:ExtMassey2}
For $m > n\geq 2$, there is a Massey product 
\[ 
P^{2^{n-1}k + 2^{m-2} }v_n = 
\langle P^{2^{n-1} k} v_n, \rho^{2^m-2} v_{m}, \rho \rangle
\]
in $\Ext_\R[h_1^{-1}]$ with no indeterminacy.
\end{lemma}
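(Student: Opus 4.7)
The plan is to mirror the proof of Lemma~\ref{lem:ExtMassey}, applying May's convergence theorem~\cite[Theorem~4.1]{May} to the $\rho$-Bockstein spectral sequence, using the same key differential $d^\rho_{2^m-1}(P^{2^{m-2}}) = \rho^{2^m-1} v_m$ from Lemma~\ref{lem:Bock-diff} but with the middle product $bc$ packaged as $\rho^{2^m-2} v_m \cdot \rho$ rather than as $\rho^{2^m-2^n} v_m \cdot \rho^{2^n-1}$. Set $a = P^{2^{n-1}k} v_n$, $b = \rho^{2^m-2} v_m$, and $c = \rho$.

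First, I verify that the Massey product is defined. The product $bc = \rho^{2^m-1} v_m$ vanishes by the defining relation of Proposition~\ref{prop:Bock-Einfty}. For $ab = \rho^{2^m-2} P^{2^{n-1}k} v_n \cdot v_m$, the hypothesis $m > n$ gives $2^m - 2 \geq 2^n - 1$, so this product absorbs the factor $\rho^{2^n-1} P^{2^{n-1}k} v_n = 0$ and vanishes as well.

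Second, I apply May's convergence theorem. Just as in the proof of Lemma~\ref{lem:ExtMassey}, there are no crossing Bockstein differentials because every class involved sits in nonnegative $\rho$-filtration. The theorem therefore identifies the Massey product on the $\rho$-Bockstein $E_\infty$-page with $a \cdot P^{2^{m-2}} = P^{2^{n-1}k + 2^{m-2}} v_n$, which by Lemma~\ref{lem:noRho} detects a unique element of $\Ext_\R[h_1^{-1}]$.

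Third, I show that the indeterminacy is trivial. It is generated by terms $a \cdot x$ and $y \cdot c$, with $x, y$ constrained to appropriate bidegrees. A direct $(mw,c)$-additive calculation places $x$ in bidegree $(mw,c) = (2^m, 2^m)$; since $m \geq 3$, Lemma~\ref{lem:Einfty-vanish} (applied with $i = 2^{m-1} \geq 4$) forces this entire bidegree of $\Ext_\R[h_1^{-1}]$ to vanish, so $a \cdot x = 0$. For the other contribution, $y \cdot \rho$ would be a nonzero $\rho$-multiple sharing bidegree with $P^{2^{n-1}k + 2^{m-2}} v_n$; Remark~\ref{rmk:noRho} rules out any such $\rho$-multiples, so $y \cdot \rho = 0$.

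The main obstacle is the bookkeeping for the indeterminacy, specifically recognizing that $(2^m, 2^m)$ lands in the vanishing range of Lemma~\ref{lem:Einfty-vanish} and that Remark~\ref{rmk:noRho} forbids hidden $\rho$-extensions at the target's bidegree. Once those two ingredients are in hand, the argument is essentially a reshuffling of the proof of Lemma~\ref{lem:ExtMassey}, with the outer and middle factors of the bracket repackaged.
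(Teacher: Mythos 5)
Your proof is correct and follows essentially the same route as the paper's: apply May's convergence theorem to the Bockstein differential $d^\rho_{2^m-1}(P^{2^{m-2}}) = \rho^{2^m-1}v_m$, note the absence of crossing differentials, and kill the indeterminacy via Lemma~\ref{lem:Einfty-vanish} for the $a\cdot x$ side and Remark~\ref{rmk:noRho} for the $y\cdot\rho$ side. You add some useful explicit bookkeeping (verifying $ab=bc=0$ and pinning down the bidegree $(2^m,2^m)$ of $x$), but there is no substantive difference in the argument.
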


\begin{proof}
The Massey product formula follows from the Bockstein differential 
$d^\rho_{2^{m}-1}(P^{2^{m-2}}) = \rho^{2^m-1}v_m$ 
and May's convergence theorem \cite[Theorem~4.1]{May}. 
There are no crossing Bockstein differentials as all classes are in nonnegative $\rho$-filtration.

The indeterminacy of this bracket is generated by products of the form
$P^{2^{n-1} k} v_n \cdot x$ and
$y \cdot \rho$.
We showed in Lemma \ref{lem:Einfty-vanish} that 
$0$ is the only possibility for $x$.
We observed in Remark \ref{rmk:noRho}
that $y \cdot \rho$ must be zero because there are no multiples of
$\rho$ in the appropriate bidegree.
\end{proof}

The relations in the Bockstein $E_\infty$-page given in 
Proposition \ref{prop:Bock-Einfty} may lift to
$\Ext_\R [h_1^{-1}]$ with additional terms that are multiples of
$\rho$.
In other words, there may be hidden relations in the
Bockstein spectral sequence.
For example, for degree reasons it is possible that
$P^2 v_2 \cdot P^{16} v_5 + P^{18} v_2 \cdot v_5$
equals $\rho^4 P^{16} v_3 \cdot v_3^5$.
Proposition \ref{prop:no-hidden} shows that there are no such hidden
terms in the relations in $\Ext_\R [h_1^{-1}]$.

\begin{prop}
\label{prop:no-hidden}
There are no hidden relations in the Bockstein spectral sequence.
\end{prop}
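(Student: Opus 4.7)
The plan is to show that each of the two families of relations in Proposition~\ref{prop:Bock-Einfty} holds in $\Ext_\R[h_1^{-1}]$ on the nose, with no hidden $\rho$-multiple error term.

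The $\rho$-torsion relations $\rho^{2^n-1} P^{2^{n-1}k} v_n = 0$ should follow directly from a bidegree analysis. The element lives in bidegree $((2k+1) 2^n - 1, (2k+1) 2^n)$, where $mw - c = -1$. Running through the reduced $E_\infty$-monomials described in Remark~\ref{rmk:Einfty-basis}, this value of $mw - c$ forces the $\rho$-exponent to be $a = \sum_i (2^{n_i} - 2) + 1$, which either vanishes (and then $mw = 0$, contradicting the required Milnor-Witt degree) or exceeds the allowed bound $a \leq 2^{n_1} - 2$ for $l \geq 1$. Hence the Bockstein $E_\infty$-page, and therefore also $\Ext_\R[h_1^{-1}]$, vanishes in this bidegree, and the relation follows.

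The product relation in the case $m = n$ is immediate: the ambient bidegree matches that of $P^{2^{n-1}(k+j)} v_n \cdot v_n$, so Remark~\ref{rmk:noRho} (combined with Lemma~\ref{lem:noRho2}) rules out any hidden $\rho$-multiple. The case $m > n$ is more delicate, because, as the remark following Lemma~\ref{lem:noRho2} shows, the ambient bidegree can genuinely contain $\rho$-multiples. Here the plan is to use Lemma~\ref{lem:ExtMassey2} to rewrite $P^{2^{n-1}(k+2^{m-n}j)} v_n$ as a Massey product, multiply by $v_m$, and then shuffle so that the combined expression $P^{2^{n-1}k} v_n \cdot P^{2^{m-1}j} v_m + P^{2^{n-1}(k+2^{m-n}j)} v_n \cdot v_m$ lands in the indeterminacy of a Massey bracket whose middle entry is annihilated on one side by the $\rho$-torsion relations just established. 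The main obstacle is tracking this Massey product indeterminacy carefully; fortunately the brackets in Lemmas~\ref{lem:ExtMassey} and~\ref{lem:ExtMassey2} are strictly defined with no indeterminacy of their own, and the bidegree of the remaining indeterminacy should be one where either Lemma~\ref{lem:Einfty-vanish} or Remark~\ref{rmk:noRho} applies.
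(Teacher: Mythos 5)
Your handling of the $\rho$-torsion relations and of the $m=n$ product case is sound. The degree count showing that the bidegree of $\rho^{2^n-1}P^{2^{n-1}k}v_n$ is empty does work (though the paper dispatches this more economically by observing that the $\rho$-Bockstein spectral sequence never has hidden $\rho$-extensions, which kills all the first family of relations at once), and for $m=n$ the appeal to Lemma~\ref{lem:noRho2} and Remark~\ref{rmk:noRho} is exactly right.

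The gap is in the $m>n$ case, which is where all the real work lies. Your plan --- rewrite $P^{2^{n-1}(k+2^{m-n}j)}v_n$ via Lemma~\ref{lem:ExtMassey2}, multiply by $v_m$, shuffle, and conclude because ``the combined expression lands in the indeterminacy of a Massey bracket'' --- is not an argument. You never identify which bracket is supposed to contain the difference, and the statement is in tension with your own observation that the brackets in Lemmas~\ref{lem:ExtMassey} and~\ref{lem:ExtMassey2} have no indeterminacy: if the difference of the two monomials is to sit in an indeterminacy set that you then argue is zero, you have to first exhibit a bracket inclusion, and that is precisely the manipulation you have not carried out. What the paper actually does is run the induction on the other factor. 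Use Lemma~\ref{lem:ExtMassey} to write $P^{2^{m-1}j}v_m = \langle\rho^{2^m}v_{m+1},\rho^{2^m-1},P^{2^{m-1}(j-1)}v_m\rangle$, multiply on the left by $P^{2^{n-1}k}v_n$ and shuffle to get $\langle P^{2^{n-1}k}v_n,\rho^{2^m}v_{m+1},\rho^{2^m-1}\rangle\, P^{2^{m-1}(j-1)}v_m$; this sits inside $\langle P^{2^{n-1}k}v_n,\rho^{2^{m+1}-2}v_{m+1},\rho\rangle\, P^{2^{m-1}(j-1)}v_m$, which by Lemma~\ref{lem:ExtMassey2} equals $P^{2^{n-1}k+2^{m-1}}v_n\cdot P^{2^{m-1}(j-1)}v_m$. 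Iterating this shifts all the $P$-powers onto $v_n$ one step at a time and lands on $P^{2^{n-1}(k+2^{m-n}j)}v_n\cdot v_m$ after $j$ steps. Your proposal needs to be replaced by this kind of explicit iterated shuffle; as written, the decisive step is only asserted.
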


\begin{proof}
There can be no hidden $\rho$ extensions in the $\rho$-Bockstein
spectral sequence, so we need only compute the products
$P^{2^{n-1}k}v_n \cdot P^{2^{m-1}j}v_m$ 
in $\Ext_\R[h_1^{-1}]$ for $m \geq n$.

Lemma \ref{lem:ExtMassey} implies that 
$P^{2^{n-1}k}v_n \cdot P^{2^{m-1}j}v_m$ 
equals
\[
P^{2^{n-1} k} v_n \langle \rho^{2^m} v_{m+1}, \rho^{2^m-1}, 
P^{2^{m-1} (j-1) } v_m \rangle.
\]
Shuffle to obtain
\[
\langle P^{2^{n-1} k} v_n, \rho^{2^m} v_{m+1}, \rho^{2^m-1} \rangle
P^{2^{m-1} (j-1) } v_m.
\]
This expression is contained in
\[
\langle P^{2^{n-1} k} v_n, \rho^{2^{m+1} - 2} v_{m+1}, \rho \rangle
P^{2^{m-1} (j-1) } v_m,
\]
which equals
$P^{2^{n-1} k + 2^{m-1}} v_n \cdot P^{2^{m-1} (j-1)} v_m$
by Lemma \ref{lem:ExtMassey2}.

By induction, 
$P^{2^{n-1}k}v_n \cdot P^{2^{m-1}j}v_m$ 
equals 
$P^{2^{n-1}(k+ 2^{m-n} j) }v_n \cdot v_m$.
\end{proof}

\begin{thm}  
$\Ext_\R[h_1^{-1}]$ is the
$\cR$-algebra   on the generators 
$P^{2^{n-1} k} v_n$ for $n \geq 2$ and $k \geq 0$ (see
Table~\ref{tbl:BockGens}),
subject to the relations 
\[
\rho^{2^n-1} P^{2^{n-1} k} v_n = 0
\]
for $n \geq 2$ and $k \geq 0$, and
\[
P^{2^{n-1}k}v_n\cdot P^{2^{m-1}j}v_m + P^{2^{n-1}(k+2^{m-n}j)}v_n\cdot v_m =0
\]
for $m \geq n \geq 2$, $k \geq 0$, and $j \geq 0$.
\end{thm}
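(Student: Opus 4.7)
The plan is to combine the work of Sections~\ref{sctn:rho-Bock} and~\ref{sctn:Adams-E2}. By Proposition~\ref{prop:Bock-Einfty}, the $\rho$-Bockstein $E_\infty$-page already carries exactly the presentation claimed in the theorem, and it is the associated graded of $\Ext_\R[h_1^{-1}]$ for the $\rho$-Bockstein filtration. Since $\rho$ has Chow degree $1$, this filtration is bounded in each bidegree, and the spectral sequence converges strongly. The remaining task is to verify that every feature of the $E_\infty$-presentation lifts to $\Ext_\R[h_1^{-1}]$ without hidden correction terms.

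First I would fix once and for all the lifts $P^{2^{n-1}k}v_n \in \Ext_\R[h_1^{-1}]$ supplied by Lemma~\ref{lem:noRho} and Remark~\ref{rmk:Ext-names}. By Remark~\ref{rmk:Einfty-basis}, these elements together with $\rho$ and $h_1$ generate the $E_\infty$-page as an $\cR$-algebra, so by convergence they generate $\Ext_\R[h_1^{-1}]$ as an $\cR$-algebra. The product relations
\[
P^{2^{n-1}k}v_n\cdot P^{2^{m-1}j}v_m + P^{2^{n-1}(k+2^{m-n}j)}v_n\cdot v_m = 0
\]
then hold on the nose in $\Ext_\R[h_1^{-1}]$, as this is precisely the content of Proposition~\ref{prop:no-hidden}.

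The main obstacle is the family of $\rho$-torsion relations $\rho^{2^n-1}P^{2^{n-1}k}v_n = 0$. These hold on $E_\infty$, but a priori the product in $\Ext_\R[h_1^{-1}]$ could equal a nonzero class of Bockstein filtration $\geq 2^n$ in the bidegree $(mw,c) = \bigl(2^n(1+2k) - 1,\; 2^n(1+2k)\bigr)$. To rule this out I would use the basis of Remark~\ref{rmk:Einfty-basis}: any such detecting class is a sum of basis monomials $\rho^b P^{2^{m_1-1}j}v_{m_1}\cdot v_{m_2}\cdots v_{m_a}$ with $b \le 2^{m_1}-2$ and $n \le m_1 \le \cdots \le m_a$. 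The constraint $b \ge 2^n$ already forces $m_1 \ge n+1$, and then the same style of argument used in Lemmas~\ref{lem:noRho} and~\ref{lem:noRho2} — comparing $u = mw - c$ and examining $(mw,c)$ modulo successive powers of $2$ — eliminates all remaining cases.

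Finally, completeness of the relations is formal: any further relation in $\Ext_\R[h_1^{-1}]$ would, upon passage to the associated graded, yield a nontrivial relation on the $\rho$-Bockstein $E_\infty$-page beyond those of Proposition~\ref{prop:Bock-Einfty}, which is impossible.
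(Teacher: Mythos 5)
Your proposal is correct and builds from the same two pillars that the paper's one-line proof cites, namely Propositions~\ref{prop:Bock-Einfty} and~\ref{prop:no-hidden}, together with the convergence and generation remarks that the paper leaves implicit. Where you genuinely diverge is in the treatment of the $\rho$-torsion relations $\rho^{2^n-1}P^{2^{n-1}k}v_n=0$: the paper disposes of these in one stroke by invoking the structural fact that a $\rho$-Bockstein spectral sequence admits no hidden $\rho$-extensions (this is stated in Remark~\ref{rmk:noRho} and is the opening sentence of the proof of Proposition~\ref{prop:no-hidden}), combined with the uniqueness of the lift from Lemma~\ref{lem:noRho}. You instead propose a direct bidegree argument in the style of Lemmas~\ref{lem:noRho} and~\ref{lem:noRho2}, ruling out any nonzero class that $\rho^{2^n-1}P^{2^{n-1}k}v_n$ could hide behind. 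Both routes work. Your route is more elementary and self-contained; the paper's route is shorter and reuses a general structural property of Bockstein filtrations. One small simplification to your argument: you do not need the restriction $b\ge 2^n$ or the modular analysis at all, because the single invariant $u=mw-c$ already kills everything. For the target bidegree one has $u=-1$, while a basis monomial $\rho^b P^{2^{m_1-1}j}v_{m_1}\cdots v_{m_a}$ with $b\le 2^{m_1}-2$ and $a\ge 1$ satisfies $u=-b+\sum_i(2^{m_i}-2)$, and equating this to $-1$ forces $\sum_{i\ge 2}(2^{m_i}-2)\le -1$, which is impossible. So that bidegree of $\Ext_\R[h_1^{-1}]$ is actually zero, making the relation automatic.
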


\begin{pf}
This follows immediately from Propositions \ref{prop:Bock-Einfty}
and \ref{prop:no-hidden}.
\end{pf}

\begin{rmk} 
\label{rmk:Ext-basis}
Analogously to Remark \ref{rmk:Einfty-basis},
an 
$\cR$-module basis 
for $\Ext_\R[h_1^{-1}]$
is given by monomials of the form $P^{2^{n-1} k} v_n \cdot v_{m_1}\cdots v_{m_a}$, where $n\leq m_1\leq \dots\leq m_a$.
\end{rmk}

\section{Adams differentials}
\label{sctn:AdamsDiffs}

Before computing with the $h_1$-inverted $\R$-motivic Adams spectral
sequence, we will consider convergence.  
A priori, there could be an infinite
family of homotopy classes linked together by infinitely many hidden
$\eta$ multiplications. 
These classes would not be detected in $\Ext_\R [h_1^{-1}]$.
Lemma \ref{lem:Ext-finite} implies that this cannot occur for degree 
reasons.

\begin{lemma}
\label{lem:Ext-finite}
Let $m > 0$ be a fixed Milnor-Witt stem.
There exists a constant $A$ such that
$\Ext_\R^{(s,f,w)}$ vanishes when $s - w = m$, $s \neq 0$,
$f > A$, and $f > s + 1$.
\end{lemma}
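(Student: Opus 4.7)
The plan is to apply the $\rho$-Bockstein spectral sequence $\Ext_\C[\rho]\Rightarrow\Ext_\R$. Since $\rho$ has tridegree $(s,f,w) = (1,0,1)$, both its Milnor-Witt degree and its Adams filtration are zero; consequently the Bockstein differentials preserve the bigrading $(mw,f)$, and $E_\infty^{(s,f,w)}$ is the associated graded of $\Ext_\R^{(s,f,w)}$ in each tridegree. It therefore suffices to show that $E_1 = \Ext_\C[\rho]$ vanishes in every tridegree $(s,f,w)$ satisfying the hypotheses of the lemma.

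An $E_1$-class in tridegree $(s,f,w)$ has the form $\rho^k x$ for some $k \geq 0$ and some $x \in \Ext_\C^{(s_0,f,w_0)}$ with $s_0 = s-k$ and $w_0 = w-k$. The relation $s_0 - w_0 = s - w = m$ places $x$ in Milnor-Witt stem $m$, and the hypothesis $f > s+1$ becomes $f > s_0 + k + 1$; in particular $f > s_0 + 1$ since $k \geq 0$.

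The key input is the motivic Adams vanishing line for $\Ext_\C$: there is a constant $C$ such that $\Ext_\C^{(s_0,f,w_0)} = 0$ whenever $s_0 > 0$ and $f > s_0/2 + C$. This is the $\C$-motivic analogue of Adams's classical vanishing theorem and may be deduced from the classical statement via the $\tau$-Bockstein spectral sequence $\Ext_\cl[\tau]\Rightarrow\Ext_\C$. Combining $f > s_0 + 1$ with $f \leq s_0/2 + C$ forces $s_0 + 1 < s_0/2 + C$, i.e., $s_0 < 2C - 2$, so only finitely many $s_0 > 0$ can contribute, and for each such $s_0$ the Adams filtration satisfies $f \leq s_0/2 + C \leq C$. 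Setting $A = \lceil C \rceil$ therefore eliminates every contribution with $s_0 > 0$. The case $s_0 = 0$ is immediate because $\Ext_\C^{(0,f,w_0)}$ is concentrated in Adams filtration $0$ (the $\F_2\cdot\tau^{-w_0}$ classes for $w_0 \leq 0$), which is incompatible with our forced $f > s_0 + k + 1 \geq 1$; and $s_0 < 0$ does not occur, since $\Ext_\C$ is supported in nonnegative stems.

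The main obstacle is securing the motivic Adams vanishing line for $\Ext_\C$ in the stated slope-$1/2$ form. Once this standard input is available, the remainder of the proof is a straightforward combinatorial check of tridegrees on the Bockstein $E_1$-page. If the vanishing line is not directly available from the literature, it can be derived from Adams's classical theorem by running the $\tau$-Bockstein spectral sequence and carefully tracking the weight grading.
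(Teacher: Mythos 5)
Your reduction in the first paragraph is sound in outline (the $E_\infty$-page is a subquotient of the $E_1$-page in each tridegree), but the rest of the argument rests on a sign error that reverses a key inequality, and in fact the strategy of proving $E_1$-page vanishing cannot work when $s<0$. The element $\rho$ lies in $\hat{\pi}^\R_{-1,-1}$ --- witness the relation $\rho\eta+2=0$ in $\hat{\pi}^\R_{0,0}$, with $\eta$ in tridegree $(1,1,1)$ --- so $\rho$ has tridegree $(-1,0,-1)$, not $(1,0,1)$; one can also check this against the paper's assertion that $y$ sits in degree $(s-k+1,f-1,w-k)$. Hence if $\rho^k x$ has tridegree $(s,f,w)$ then $x\in\Ext_\C^{(s+k,f,w+k)}$, so $s_0=s+k\geq s$, and the hypothesis $f>s+1$ gives only $f>s_0-k+1$, which becomes \emph{weaker}, not stronger, as $k$ grows, so the vanishing line never triggers. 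Moreover the $E_1$-page is genuinely nonzero in the negative-stem region you must control: for $s<0$ the classes $\rho^{-s}\tau^m h_0^f$ live in $E_1=\Ext_\C[\rho]$ with Milnor--Witt degree $m$, stem $s$, and filtration $f$ for every $f\geq 0$, so no constant $A$ can make $E_1$ vanish there. Your parenthetical that $\Ext_\C^{(0,f,w_0)}$ is concentrated in filtration $0$ is also false: the $0$-stem of $\Ext_\C$ is the $h_0$-tower $\bM_2^\C[h_0]$, nonzero in every filtration.

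The paper's route is structurally different and this difference is essential. One chooses $A$ to exceed every Adams filtration occurring in the $m$-stem of $\Ext_\cl$, so that via $\Ext_\R[\rho^{-1}]\iso\Ext_\cl[\rho^{\pm 1}]$ any nonzero $x\in\Ext_\R^{(s,f,w)}$ with $s-w=m$ and $f>A$ must be $\rho$-torsion, and hence must be hit by a Bockstein differential $d_k(y)=\rho^k x$. The vanishing line for $\Ext_\C$ is then applied not to $x$ or its $E_1$-lift but to the differential's \emph{source} $y$, which lies in degree $(s-k+1,f-1,w-k)$: its stem is $\leq s$ for $k\geq 1$, so $f-1>s$ places $y$ strictly above the slope-$1$ vanishing line for $\Ext_\C$, forcing $y$ to be of the form $\tau^a h_0^b$ and therefore $s=0$. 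For $s>0$ one could rescue your $E_1$-vanishing approach by additionally invoking nonnegativity of the Chow degree $s_0+f-2w_0$ in $\Ext_\C$ to bound $k$, but for $s<0$ one must run the Bockstein differentials as the paper does.
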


Lemma \ref{lem:Ext-finite} can be restated in the following more casual
form.
Within a fixed Milnor-Witt stem, there exists a horizontal line
and a line of slope 1 such that
$\Ext_\R$ vanishes in the region above both lines,
except in the $0$-stem.
Figure \ref{fig:vanishing} depicts the shape of the vanishing region.

\begin{center}
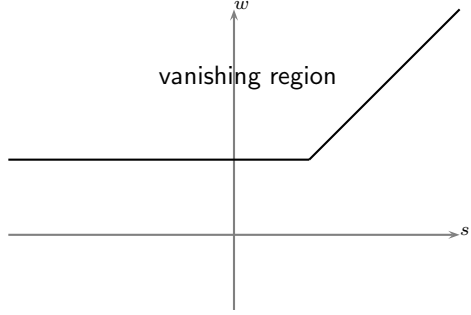
\begin{figure}[ht]
\caption{The vanishing region in a Milnor-Witt stem \label{fig:vanishing}}
\begin{pspicture}(-3,-1)(3,3.5)
\psline[linecolor=grey]{->}(-3,0)(3,0)
\psline[linecolor=grey]{->}(0,-1)(0,3)

\psline(1,1)(3,3)
\psline(-3,1)(1,1)

\put(-1,2){\textsf{vanishing region}}
\scriptsize
\put(3,0){$s$}
\put(0,3){$w$}
\end{pspicture}
\end{figure}
\end{center}

\begin{proof}
This argument occurs in $\Ext_\R$, where
$h_1$ has not been inverted.

As explained in \cite[Theorem 4.1]{DI},
the elements in the $m$-stem of the classical $\Ext$ groups
$\Ext_{\cl}$ correspond to
elements of $\Ext_\R$ in the Milnor-Witt $m$-stem 
that remain non-zero after $\rho$ is inverted, i.e., that support
infinitely many multiplications by $\rho$.
Each stem of $\Ext_{\cl}$ is
finite except for the $0$-stem.  For $m > 0$, 
choose $A$ to be larger than the Adams filtrations
of all of the elements in the $m$-stem of $\Ext_{\cl}$.
Then $A$ is larger than the Adams filtrations of every
element of $\Ext_\R$
in the Milnor-Witt $m$-stem that remain non-zero after
$\rho$ is inverted.

Let $x$ be a non-zero element of $\Ext_\R^{(s,f,w)}$
such that $s - w = m$, $f > A$, and $f > s + 1$. 
We will show that $s$ must equal zero.

The choice of $A$ guarantees that $x$ is annihilated by some positive power 
of $\rho$.
Suppose that $\rho^k x = 0$ but $\rho^{k-1} x$ is non-zero, for some $k>0$.
Then there must be a differential in the
$\rho$-Bockstein spectral sequence 
of the form
$d_k (y ) = \rho^k x$, where $y$ is an element
of $\Ext_\C$ in degree $(s-k+1, f-1, w- k)$.

The argument from \cite{Adams1} establishes a vanishing line of slope
$1$ in $\Ext_\C$.  
The conditions $s > f + 1$ and $k > 0$ imply that
the element $y$ lies strictly above this vanishing line,
so it must be of the form $\tau^a h_0^b$ with $b \geq 1$.
The only $\rho$-Bockstein differentials on such classes are $d_1( \tau^{2c+1} h_0^b ) =
\rho \tau^{2c} h_0^{b+1}$, which
implies that $x$ must be of the form $\tau^{2c} h_0^b$.  
This shows that $s = 0$.
\end{proof}

The $h_1$-inverted motivic Adams spectral sequence over $\C$ was 
studied in \cite{GI,AM}.  It takes the form
\[ 
\F_2[h_1^{\pm 1}, P,v_2,v_3,\dots] \Rightarrow 
\hat{\pi}^\C_{*,*}[\eta^{-1}],
\]
where $\hat{\pi}^\C_{*,*}[\eta^{-1}]$ is the $\eta$-inverted
motivic stable homotopy ring of the 2-completed motivic 
sphere spectrum over $\C$.
This spectral sequence has differentials
\[
\label{d2diffs} 
d_2(P^kv_n) = P^k v_{n-1}^2
\]
for all $k \geq 0$ and all $n \geq 3$.

\begin{lemma}
\label{lem:Adams-d2}
In the $h_1$-inverted $\R$-motivic Adams spectral sequence, there are 
differentials
\[
d_2(P^{2^{n-1} k} v_n ) = P^{2^{n-1} k} v_{n-1}^2
\]
for all $k \geq 0$ and all $n \geq 3$.
\end{lemma}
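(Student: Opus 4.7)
The plan is to use the map of Adams spectral sequences induced by extension of scalars from $\R$-motivic to $\C$-motivic homotopy theory, together with the already-known Adams $d_2$ differentials on the $\C$-motivic side. By naturality, this map commutes with $d_2$. The element $P^{2^{n-1}k} v_n \in \Ext_\R[h_1^{-1}]$ maps to $P^{2^{n-1}k} v_n \in \Ext_\C[h_1^{-1}]$, where the stated $\C$-motivic formula gives $d_2(P^{2^{n-1}k} v_n) = P^{2^{n-1}k} v_{n-1}^2$. Consequently, the difference
\[
d_2(P^{2^{n-1}k} v_n) - P^{2^{n-1}k} v_{n-1}^2
\]
lies in the kernel of the comparison map $\Ext_\R[h_1^{-1}] \to \Ext_\C[h_1^{-1}]$.

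Since the $\rho$-Bockstein spectral sequence has the form $\Ext_\C[\rho] \Rightarrow \Ext_\R$, the kernel of the comparison map consists precisely of classes detected in positive $\rho$-Bockstein filtration; that is, any such correction term must be detected by a $\rho$-multiple on the $\rho$-Bockstein $E_\infty$-page in the same tridegree as $P^{2^{n-1}k} v_{n-1}^2$.

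To finish, I would rewrite $P^{2^{n-1}k} v_{n-1}^2 = P^{2^{(n-1)-1}(2k)} v_{n-1} \cdot v_{n-1}$, noting that since $n \geq 3$ we have $n - 1 \geq 2$, so this is exactly of the form considered in Lemma \ref{lem:noRho2} (with $n$ replaced by $n-1$ and $k$ replaced by $2k$). Lemma \ref{lem:noRho2} asserts that no non-trivial $\rho$-multiple of a Bockstein $E_\infty$ basis element shares its bidegree. Hence the correction term must vanish, giving the claimed formula. The only point worth double-checking is that $d_2$ preserves the correct bigrading: in $(s,f,w)$ coordinates $d_2$ has degree $(-1,2,0)$, equivalently $(mw,c) \mapsto (mw-1, c+1)$, and a direct degree computation shows that $P^{2^{n-1}k} v_n$ and $P^{2^{n-1}k} v_{n-1}^2$ differ by exactly this shift.

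The main obstacle is really just the bookkeeping in the last step, namely recognizing that the target $P^{2^{n-1}k} v_{n-1}^2$ falls under the hypothesis of Lemma \ref{lem:noRho2}, so that no hidden $\rho$-correction is permitted. Once that identification is made, the comparison argument is essentially formal.
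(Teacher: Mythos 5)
Your argument is correct and is essentially the paper's own proof: compare to the $\C$-motivic Adams spectral sequence via extension of scalars, note that $\rho$ maps to zero, so the $\R$-motivic $d_2$ equals the $\C$-motivic formula up to $\rho$-divisible terms, and then invoke Lemma~\ref{lem:noRho2} (after the rewriting $P^{2^{n-1}k}v_{n-1}^2 = P^{2^{n-2}(2k)}v_{n-1}\cdot v_{n-1}$, valid since $n-1\geq 2$) to rule out any such correction. The extra bookkeeping you spell out matches what the paper leaves implicit.
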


\begin{proof}
There is an extension of scalars functor from $\R$-motivic homotopy theory
to $\C$-motivic homotopy theory.  This functor induces a map
\[
\xymatrix{
\Ext_{\cA^\R}(\bM_2^\R,\bM_2^\R)[h_1^{-1}] \ar@=[r] \ar[d] & \hat{\pi}^\R_{*,*} [\eta^{-1}] \ar[d] \\
\Ext_{\cA^\C}(\bM_2^\C,\bM_2^\C)[h_1^{-1}] \ar@=[r] & \hat{\pi}^\C_{*,*} [\eta^{-1}]
} 
\]
from the $\R$-motivic Adams spectral sequence to the $\C$-motivic Adams
spectral sequence.  This map takes $\rho$ to zero.

The map of spectral sequences implies that the $\R$-motivic
Adams differential 
$d_2(P^{2^{n-1} k} v_n )$ equals $P^{2^{n-1} k} v_{n-1}^2$ plus 
terms that are divisible by $\rho$.
Lemma \ref{lem:noRho2} implies that there are no possible additional
terms in the relevant bidegree.
\end{proof}

We record the following simple computation, as we will employ it several times.

\begin{lemma}\label{lem:basic} Let $S$ be an $\F_2$-algebra. Let $B = S[w_1,w_2,\dots]$ be a polynomial ring in infinitely many variables, and define a differential on $B$ by $\partial (w_n) = w_{n-1}^2$ for $n\geq 2$. Then $\mathrm{H}^*(B,\partial) \iso S[w_1]/w_1^2$.
\end{lemma}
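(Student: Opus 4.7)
The plan is to reduce to $S = \F_2$, compute the cohomology of each finite truncation by induction, and then pass to a filtered colimit. Since $\partial$ is $S$-linear, $(B, \partial) \iso S \otimes_{\F_2} (\F_2[w_1, w_2, \dots], \partial)$, and the K\"unneth formula over $\F_2$ reduces the problem to computing $\rH^*(\F_2[w_1, w_2, \dots], \partial)$. Set $B_k = \F_2[w_1, \dots, w_k]$; then $\F_2[w_1, w_2, \dots] = \colim_k B_k$, and since cohomology commutes with filtered colimits, it suffices to compute $\colim_k \rH^*(B_k)$.

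I claim that $\rH^*(B_k, \partial) \iso \F_2[w_k^2] \otimes \F_2[w_1]/w_1^2$ for every $k \geq 2$. The base case $k = 2$ is a direct calculation from $\partial(w_2^j) = j w_2^{j-1} w_1^2$ in characteristic two: the cycles form $\F_2[w_1, w_2^2]$ and the boundaries form the ideal $w_1^2 \cdot \F_2[w_1, w_2^2]$. For the inductive step, filter $B_{k+1} = B_k[w_{k+1}]$ by $w_{k+1}$-degree. Since $\partial$ lowers $w_{k+1}$-degree by $0$ or $1$, the filtration is respected, and the associated multiplicative spectral sequence has $E_1 = \rH^*(B_k) \otimes \F_2[w_{k+1}]$ with $d_1$ the derivation sending $w_{k+1} \mapsto w_k^2$ and vanishing on $\rH^*(B_k)$. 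A Koszul-type computation on $(\F_2[w_k^2, w_{k+1}], d_1)$, with $\F_2[w_1]/w_1^2$ as a spectator factor, yields $E_2 = \F_2[w_{k+1}^2] \otimes \F_2[w_1]/w_1^2$; since $\partial$ drops filtration by at most one, $d_r = 0$ for $r \geq 2$ and $E_\infty = E_2$. The algebra structure lifts on the nose because $w_{k+1}^2$ is a cycle in $B_{k+1}$ and $w_1^2 = \partial w_2$ is a boundary.

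To finish, observe that the inclusion $B_k \hookrightarrow B_{k+1}$ sends the polynomial generator $w_k^2$ to the boundary $\partial w_{k+1}$, so it induces the zero map on the positive-degree part of $\F_2[w_k^2]$ while restricting to the identity on $\F_2[w_1]/w_1^2$. Taking the colimit annihilates every $w_k^{2j}$ with $j \geq 1$ and preserves only $1$ and $w_1$, so $\rH^*(\F_2[w_1, w_2, \dots]) \iso \F_2[w_1]/w_1^2$; tensoring back with $S$ yields the lemma. The only delicate point is the $E_2$-collapse of the inductive spectral sequence, and this is automatic from the one-step filtration-lowering bound on $\partial$.
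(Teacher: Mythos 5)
The paper records Lemma~\ref{lem:basic} as ``a simple computation'' and gives no proof, so there is nothing to compare against line by line; the useful question is whether your argument is correct and complete, and it is. Your reduction to $S=\F_2$ via flatness, the direct base-case calculation for $\F_2[w_1,w_2]$, the one-step filtration of $B_{k+1}$ by $w_{k+1}$-degree (which forces $d_r=0$ for $r\geq 2$ because $\partial$ can lower that degree by at most one), the identification $\rH^*(B_k)\iso \F_2[w_k^2]\otimes\F_2[w_1]/w_1^2$ with the multiplicative extension resolved by noting $w_{k+1}^2$ is an honest cycle and $w_1^2=\partial w_2$ an honest boundary, and finally the observation that the transition maps kill $w_k^2=\partial w_{k+1}$ so the colimit of the cohomologies is $\F_2[w_1]/w_1^2$ --- all of this is correct, and the convergence of both the colimit step and the spectral sequence step is legitimate (the filtration is exhaustive and bounded below). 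Two small remarks: the label ``Koszul-type'' is loose, since the inner complex $(\F_2[w_k^2,w_{k+1}],\,d_1)$ has a polynomial (not exterior) first variable and so $\rH^*$ is $\F_2[w_{k+1}^2]$ rather than $\F_2$; and one could avoid the spectral sequence entirely, since at each stage the same direct cycle/boundary analysis you used for the base case works verbatim with $w_k^2$ in place of $w_1^2$. This would shorten the inductive step to a few lines and better match the paper's framing of the lemma as ``simple,'' but your version is certainly rigorous.
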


Lemma~\ref{lem:basic} implies, for example, that the $h_1$-inverted 
$\C$-motivic Adams $E_3$-page is $\F_2[h_1^{\pm 1}, P,v_2]/v_2^2$.

\begin{prop} 
\label{prop:Adams-E3}
The $h_1$-inverted $\R$-motivic 
Adams $E_3$-page 
is free as an $\cR$-module on the generators 
listed in Table~\ref{tbl:E3gen}
for $n\geq 2$, $k \geq 0$, and $j \geq 0$.
Almost all products of these generators are zero, except
that 
\[
P^{4k} v_2 \cdot P^{4j+2} v_2 = P^{4k+4j+2} v_2^2
\]
and
\[
\rho^{2^{n-1} - 1} P^{2^{n-1} \cdot 2k} v_n \cdot 
\rho^{2^{n-1} - 1} P^{2^{n-1} (2j+1)} v_n =
\rho^{2^n - 2} P^{2^{n-1} (2k+2j+1)} v_n^2
\]
for $n \geq 3$.
\end{prop}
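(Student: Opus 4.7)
The plan is to compute the $d_2$ cohomology of $\Ext_\R[h_1^{-1}]$ using the $\cR$-module basis of Remark~\ref{rmk:Ext-basis}, together with the fact that $d_2$ is $\cR$-linear since $d_2(\rho)=0$. First I would pin down $d_2$ on all generators of $\Ext_\R[h_1^{-1}]$ as an $\cR$-algebra. Lemma~\ref{lem:Adams-d2} gives $d_2(P^{2^{n-1}k}v_n)=P^{2^{n-1}k}v_{n-1}^2$ for $n\geq 3$. For $n=2$, the target of $d_2(P^{2k}v_2)$ lies in degree $(mw,c)=(2+8k,2+8k)$, and Lemma~\ref{lem:Einfty-vanish} forces this group to vanish, so $d_2(P^{2k}v_2)=0$. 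The Leibniz rule then extends $d_2$ to every basis monomial $P^{2^{n-1}k}v_n\cdot v_{m_1}\cdots v_{m_a}$.

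The computation naturally splits into a ``generic'' part and a $\rho$-torsion contribution. For the generic part, the differential mirrors the setup of Lemma~\ref{lem:basic}: the polynomial DGA $\F_2[h_1^{\pm 1}][P,v_2,v_3,\dots]$ with $\partial v_n=v_{n-1}^2$ has cohomology $\F_2[h_1^{\pm 1}][P,v_2]/v_2^2$. Intersecting with our actual basis, in which $P$-exponents attached to a monomial of smallest $v$-index $n$ must be multiples of $2^{n-1}$, selects out $P^{2k}v_2$ and $P^{4k+2}v_2^2$ as the surviving generic classes: the even-exponent squares $P^{4k}v_2^2=d_2(P^{4k}v_3)$ are boundaries, while $P^{4k+2}v_2^2$ has no candidate preimage since $P^{4k+2}v_3$ is not a basis element.

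The $\rho$-torsion relations $\rho^{2^n-1}P^{2^{n-1}k}v_n=0$ create additional cycles at the top of each $\rho$-tower. The key observation: $d_2(P^{2^{n-1}k}v_n)$ has $\rho$-torsion only $2^{n-1}-1$, strictly less than the $2^n-1$ torsion of $P^{2^{n-1}k}v_n$ itself, so $\rho^{2^{n-1}-1}P^{2^{n-1}k}v_n$ becomes a new $d_2$-cycle for $n\geq 3$. An analogous Leibniz computation yields $d_2(P^{2^{n-1}k}v_n\cdot v_n)=2P^{2^{n-1}k}v_n v_{n-1}^2=0$ in characteristic~$2$, making these squares themselves cycles; among their $\rho$-multiples, $\rho^{2^n-2}P^{2^{n-1}(2k+1)}v_n^2$ survives in $E_3$ because the only candidate preimage $P^{2^{n-1}(2k+1)}v_{n+1}$ fails to be a basis element, whereas $\rho^{2^n-2}P^{2^n k}v_n^2=d_2(\rho^{2^n-2}P^{2^n k}v_{n+1})$ is a boundary. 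Systematic enumeration of basis monomials (tracking that boundaries always contain a repeated factor $v_mv_m$ coming from some $v_{m-1}^2$) then produces the full generator list of Table~\ref{tbl:E3gen}.

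The product relations follow by direct computation in $\Ext_\R[h_1^{-1}]$. In the $v_2$-tower, $P^{4k}v_2\cdot P^{4j}v_2=P^{4(k+j)}v_2^2=d_2(P^{4(k+j)}v_3)$ vanishes on $E_3$, while $P^{4k}v_2\cdot P^{4j+2}v_2=P^{4(k+j)+2}v_2^2$ is not a boundary, giving the first displayed formula. For $n\geq 3$, the product of two $\rho$-torsion generators yields $\rho^{2^n-2}P^{2^{n-1}(2k+2j+1)}v_n^2$; the combined $P$-exponent $2k+2j+1$ is odd, so this survives, while products with both $P$-exponents of the same parity have even combined exponent, hence are boundaries. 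All other products of generators contain a $v_m v_m$ factor of boundary type and vanish on $E_3$. The main obstacle will be the bookkeeping of which top-of-$\rho$-tower classes survive: the interaction between $\rho$-torsion and $d_2$ creates cycles absent over $\C$, and distinguishing them from boundaries systematically requires careful parity tracking of $P$-exponents against $2^{n-1}$ and $2^n$ at every stage.
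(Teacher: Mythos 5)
Your overall strategy---compute the $d_2$-cohomology by exploiting $\cR$-linearity of $d_2$, pin down $d_2$ on algebra generators (including the observation that $d_2(P^{2k}v_2)=0$ by Lemma~\ref{lem:Einfty-vanish}, which the paper leaves implicit), and then sort cycles from boundaries---points at the right conclusion and contains most of the correct key observations: the $v_n^2$ classes are cycles, the $\rho$-towers gain extra cycles because $d_2(P^{2^{n-1}k}v_n)$ has strictly smaller $\rho$-torsion than its source, and the parity of $P$-exponents relative to $2^{n-1}$ versus $2^n$ governs whether a $v_n^2$ class is a boundary. Where the paper differs is in how it makes the bookkeeping airtight: it introduces the $\F_2[h_1^{\pm1}]$-submodules $\Ext\langle k,b\rangle$ spanned by monomials with a fixed $P$-exponent $k$ and a fixed $\rho$-exponent $b$, observes that each such piece is $d_2$-closed, and then computes $H^*(\Ext\langle k,b\rangle,d_2)$ piece by piece using Lemma~\ref{lem:basic} directly or via a short exact sequence. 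That decomposition is precisely the device needed to justify claims like ``the only candidate preimage is $P^{2^{n-1}(2k+1)}v_{n+1}$'': within a fixed $\Ext\langle k,b\rangle$ the constraint $2^{m_1-1}\mid k$ and $b\le 2^{m_1}-2$ pins down the possible leading index $m_1$, which is exactly what rules out alternative preimages with several $v$-factors.

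Two concrete gaps. First, you explicitly defer the heart of the computation (``systematic enumeration \ldots requires careful parity tracking \ldots at every stage''), so as written this is a sketch rather than a proof; the $\Ext\langle k,b\rangle$ stratification is what the paper supplies to discharge exactly that burden. Second, your blanket justification for the vanishing of the remaining products---``contain a $v_mv_m$ factor of boundary type''---is not correct as stated. A product such as $P^{2k}v_2\cdot\rho^{2^{n-1}-1}P^{2^{n-1}j}v_n$ (with $n\geq 3$) produces $\rho^{2^{n-1}-1}P^{(\cdot)}v_2v_n$, which has no repeated factor; it vanishes already on $E_2$ because $\rho^3P^{(\cdot)}v_2=0$ and $2^{n-1}-1\geq 3$, not because it is a $d_2$-boundary. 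Similarly, for products like $P^{2k}v_2^2\cdot P^{4j}v_3^2$, identifying the correct $d_2$-preimage requires checking that the candidate ($P^{(\cdot)}v_2^2v_4$, say) is genuinely a basis monomial, which again is the $\Ext\langle k,b\rangle$ bookkeeping. So distinguish carefully between products that die already in $E_2$ by $\rho$-torsion and products that die in $E_3$ as boundaries; lumping them together blurs the argument.
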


\begin{table}[ht]
\captionof{table}{$\cR$-module generators for the Adams $E_3$-page}
\label{tbl:E3gen}
\begin{center}
\begin{tabular}{LLL} 
\hline
(mw,c) & \text{generator} & \text{$\rho$-torsion} \\ \hline
(0,0) & 1 & \infty \\
 (3,1)+k(8,8) & P^{2k}v_2 &3  \\
 (7,4)+k(16,16) & \rho^3P^{4k}v_3 & 4  \\
(15,8)+k(32,32) & \rho^7 P^{8k}v_4 &  8  \\
(2^n-1,2^{n-1})+k(2^{n+1},2^{n+1}) & \rho^{2^{n-1}-1}P^{2^{n-1}k}v_n &   2^{n-1}  \\
\hline
 (6,2)+(2j+1)(8,8) & P^{2(2j+1)}v_2^2 & 3 \\
 (14,2)+(2j+1)(16,16) & P^{4(2j+1)}v_3^2 & 7  \\
 (30,2)+(2j+1)(32,32) & P^{8(2j+1)}v_4^2 & 15  \\
(2^{n+1}-2,2)+(2j+1)(2^{n+1},2^{n+1}) & P^{2^{n-1}(2j+1)}v_n^2 &   2^{n}-1  \\
 \hline
\end{tabular}
\end{center}
\end{table}

\begin{rmk}
\label{rmk:Adams-E3}
The relations in Proposition \ref{prop:Adams-E3} are just the ones
that are obvious from the notation.  For example,
\begin{align*}
v_2 \cdot P^2 v_2 & = P^2 v_2^2 \\
\rho^3 P^4 v_3 \cdot \rho^3 P^8 v_3 & = \rho^6 P^{12} v_3^2.
\end{align*}
\end{rmk}

\begin{pf}
Let $\Ext\langle k, b \rangle$ be the $\F_2[h_1^{\pm 1}]$-submodule of
the $h_1$-inverted $\R$-motivic Adams $E_2$-page on generators 
of the form $\rho^b P^k v_{m_1} v_{m_2} \cdots v_{m_a}$ such that
$m_1 \leq m_2 \leq \cdots \leq m_a$.
Note that $b \leq 2^{m_1} - 2$ in this situation, since
$\rho^{2^{m_1} - 1} P^k v_{m_1} = 0$.
Also, $k$ must be a multiple of $2^{ {m_1} - 1}$.
By Lemma \ref{lem:Adams-d2},
each $\Ext \langle k, b \rangle$
is a differential graded submodule.
Thus it suffices to compute the cohomology of each $\Ext\langle k,b \rangle$. 

We start with $\Ext\langle 0, b\rangle$, which is equal to
to $\rho^b \cdot \F_2 [ h_1^{\pm 1}, v_m, v_{m+1}, \ldots ]$
as a differential graded $\F_2[h_1^{\pm 1}]$-module,
where $m$ is the smallest integer such that $b \leq 2^m - 2$.
Now Lemma \ref{lem:basic} implies that
$H^* ( \Ext \langle 0, b \rangle, d_2 )$ is a free
$\F_2[h_1^{\pm 1}]$-module on two generators $\rho^b$ and $\rho^b v_m$.

So far, we have demonstrated that the powers of $\rho$ and the elements
\[
v_2, \rho v_2, \rho^2 v_2, \rho^3 v_3, \ldots \rho^6 v_3, \rho^7 v_4, \ldots
\]
are present in the $h_1$-inverted $\R$-motivic Adams $E_3$-page.

The module $\Ext \langle k, b \rangle$ is zero when $k$ is odd.

Now assume that $k$ is equal to $2$ modulo $4$.
If $b \leq 2$, then
$\Ext \langle k, b \rangle$ is equal to
$\rho^b P^k v_2 \cdot \F_2 [ h_1^{\pm 1}, v_2, v_3, \ldots ]$
as a differential graded $\F_2[h_1^{\pm 1}]$-module.
Lemma \ref{lem:basic} implies that
$H^* (\Ext \langle k, b \rangle, d_2)$ is a free
$\F_2[h_1^{\pm 1}]$-module on two generators
$\rho^b P^k v_2$ and $\rho^b P^k v_2^2$.
If $b \geq 3$, then
$\Ext \langle k, b \rangle$ is zero because $\rho^3 P^k v_2 = 0$.

We have now shown that the elements
\[
P^k v_2, \rho P^k v_2, \rho^2 P^k v_2,
P^k v_2^2, \rho P^k v_2^2, \rho^2 P^k v_2^2
\]
are present in the $h_1$-inverted $\R$-motivic Adams
$E_3$-page for all $k$ congruent to $2$ modulo $4$.

Next assume that $k$ is equal to $4$ modulo $8$.
If $b \leq 2$, then
$\Ext \langle k, b \rangle$ 
is the free $\F_2 [h_1^{\pm 1}]$-module on generators
$\rho^b P^k v_{m_1} \cdots v_{m_a}$ such that
$m_1$ equals $2$ or $3$, and
$m_1 \leq \cdots \leq m_a$.
There is a short exact sequence
\[
\Ext \langle k, b \rangle \map
\rho^b P^k \cdot \F_2 [h_1^{\pm 1}, v_2, v_3, \ldots ] \map
\rho^b P^k \cdot \F_2 [h_1^{\pm 1}, v_4, v_5, \ldots ].
\]
By Lemma \ref{lem:basic}, 
the homology of the middle term has two generators
$\rho^b P^k$ and $\rho^b P^k v_2$,
while the homology of the right term has two generators
$\rho^b P^k$ and $\rho^b P^k v_4$.
Analysis of the long exact sequence in homology shows that 
$H^*(\Ext \langle k, b \rangle, d_2)$ has two generators
$\rho^b P^k v_2$ and $\rho^b P^k v_3^2$.

Now assume that $3 \leq b \leq 6$.
Since $\rho^b P^k v_2 = 0$, we get that
$\Ext \langle k, b\rangle$ is
equal to $\rho^b P^k v_3 \cdot \F_2[h_1^{\pm 1}, v_3, v_4, \ldots ]$.
Lemma \ref{lem:basic} implies that
$H^* (\Ext \langle k, b \rangle, d_2)$ is a free
$\F_2[h_1^{\pm 1}]$-module on two generators
$\rho^b P^k v_3$ and $\rho^b P^k v_3^2$.

Finally, if $b \geq 7$, then
$\Ext \langle k, b \rangle$ is zero because $\rho^7 P^k v_2 = 0$
and $\rho^7 P^k v_3 = 0$.  This finishes the argument
when $k$ is equal to $4$ modulo $8$, and we have shown that
$\Ext_\R [h_1^{\pm 1}]$ contains the elements
\begin{align*}
& P^k v_2, \rho P^k v_2, \rho^2 P^k v_2 \\
& \rho^3 P^k v_3, \ldots, \rho^6 P^k v_3 \\
& P^k v_3^2, \rho P^k v_3^2, \ldots, \rho^6 P^k v_3^2.
\end{align*}

Analysis of the other cases is the same as the argument for
$k = 4$ modulo $8$.
The details depend on the value of $k$ modulo $2^i$
and inequalities of the form $2^j - 1 \leq b \leq 2^{j+1} - 2$.
In each case there is a short exact sequence of 
differential graded modules whose first term is $\Ext \langle k, b \rangle$
and whose other two terms have homology that is computed
by Lemma \ref{lem:basic}.
\end{pf}

We have now calculated the $h_1$-inverted $\R$-motivic $E_3$-page. 
This $E_3$-page is displayed in Figure \ref{fig:E3}.
Beware that the grading on this chart is not the same as in a standard
Adams chart.  The Milnor-Witt stem $mw = s-w$ is plotted on the horizontal
axis, while the Chow degree $c = s+f-2w$ is plotted on the vertical axis.
As a result, an Adams $d_r$ differential has slope $-r+1$, rather than
slope $-r$.  Vertical lines in Figure \ref{fig:E3} represent
multiplications by $\rho$.

Our next goal is to establish the Adams $d_3$ differentials.
Inspection of Figure \ref{fig:E3} reveals that
the only possible non-zero $d_3$ differentials
might be supported on elements of the form $\rho^b P^{2^{n-1} k} v_n$
for $n \geq 4$.  In fact, these differentials all occur,
as indicated in Figure \ref{fig:E3} by lines that go left one unit
and up two units.
We will establish these $d_3$ differentials by first proving 
a homotopy relation in Lemma \ref{lem:rhoDiv}.

\begin{lemma}\label{lem:rhoDiv} 
For each $n\geq 2$ and $j\geq 0$, the element 
$P^{2^{n-1}(2j+1)} v_n^2$ is a permanent cycle
that detects a $\rho$-divisible element of the
$\eta$-inverted $\R$-motivic homotopy groups.
\end{lemma}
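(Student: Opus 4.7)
Here is the plan. The strategy is to express $P^{2^{n-1}(2j+1)} v_n^2$ as a Massey product in $\Ext_\R[h_1^{-1}]$ with $\rho$ appearing as an entry, lift this to a Toda bracket in $\hat\pi^\R_{*,*}[\eta^{-1}]$ via Moss's convergence theorem, and then read off both the permanent cycle claim and the $\rho$-divisibility from the structure of the resulting bracket.

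First, I would apply Lemma \ref{lem:ExtMassey2} with $k = 2j$ and $m = n+1$ to obtain
\[
P^{2^{n-1}(2j+1)} v_n = \langle P^{2^n j} v_n,\, \rho^{2^{n+1}-2} v_{n+1},\, \rho \rangle
\]
in $\Ext_\R[h_1^{-1}]$. Left-multiplying by $v_n$ and using the standard Massey product juggling identity yields
\[
P^{2^{n-1}(2j+1)} v_n^2 \in \langle P^{2^n j} v_n^2,\, \rho^{2^{n+1}-2} v_{n+1},\, \rho \rangle,
\]
where the indeterminacy is trivial by arguments parallel to the proof of Lemma \ref{lem:ExtMassey2}, using Lemma \ref{lem:Einfty-vanish} and Remark \ref{rmk:noRho}. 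I would then argue inductively on $j$ that each entry of the bracket is a permanent cycle (with the $j = 0$ base case $v_n^2$ handled by a separate degree argument, and with $\rho^{2^{n+1}-2} v_{n+1}$ permanent simply because no target in the appropriate bidegree survives to $E_3$), verify the absence of crossing Adams differentials by inspection of the $E_3$-page, and apply Moss's convergence theorem. This lifts the Massey product to a Toda bracket in homotopy
\[
[P^{2^{n-1}(2j+1)} v_n^2] \in \langle [P^{2^n j} v_n^2],\, [\rho^{2^{n+1}-2} v_{n+1}],\, \rho \rangle,
\]
which establishes the permanent cycle half of the claim.

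For the $\rho$-divisibility, the plan is to leverage the presence of $\rho$ as an entry of the bracket. After invoking the symmetry $\langle \alpha, \beta, \rho \rangle = \pm \langle \rho, \beta, \alpha \rangle$ of 3-fold Toda brackets, a cofiber-level analysis---combining a nullhomotopy of $\rho \cdot [\rho^{2^{n+1}-2} v_{n+1}]$ with a nullhomotopy of $[\rho^{2^{n+1}-2} v_{n+1}] \cdot [P^{2^n j} v_n^2]$---should exhibit the bracket element as $\rho$ times a specific lift, modulo indeterminacy that can be controlled by the degree arguments already invoked. I expect this last step to be the main obstacle: while the Massey/Moss machinery gives permanence readily, extracting an honest $\rho$-divisor from the Toda bracket requires a careful juggling argument together with an accounting of the indeterminacy, and may require a further shuffle of the form $\langle \alpha, \beta, \rho \rangle \cdot \gamma = \alpha \cdot \langle \beta, \rho, \gamma \rangle$ for a well-chosen $\gamma$ with $\rho\gamma = 0$ in order to produce the divisor explicitly.
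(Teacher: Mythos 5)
Your plan follows the same skeleton as the paper's proof: produce the class as a bracket involving $\rho$ via Lemma \ref{lem:ExtMassey2} and Massey-product juggling, then lift to a Toda bracket with Moss's convergence theorem. However, you have missed the single observation that dissolves the step you correctly flag as ``the main obstacle.'' The middle ingredient you carry along, $P^{2^n j} v_n^2 = P^{2^{n-1}\cdot 2j}v_n^2$, is the \emph{target} of the Adams differential $d_2(P^{2^n j}v_{n+1}) = P^{2^n j}v_n^2$ from Lemma~\ref{lem:Adams-d2}; it is a boundary, and therefore it detects the zero class in $\hat\pi^\R_{*,*}[\eta^{-1}]$. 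Once Moss's theorem is applied, the resulting Toda bracket is $\langle 0, \alpha, \rho\rangle$ (equivalently $\langle \rho, \alpha, 0\rangle$ after the symmetry you already invoke), and a three-fold bracket with an outer entry equal to $0$ coincides with its own indeterminacy, which here is $0\cdot\hat\pi^\R_{*,*}[\eta^{-1}] + \hat\pi^\R_{*,*}[\eta^{-1}]\cdot\rho$: every element is a multiple of $\rho$, and no cofiber-level juggling or auxiliary shuffle with a class $\gamma$ is needed. Without recognizing that this entry is zero, the argument you sketch cannot close --- there is no general reason a class in a bracket $\langle\alpha,\beta,\rho\rangle$ with $\alpha,\beta\neq 0$ should be $\rho$-divisible, and the ``careful juggling'' you gesture at would have to supply precisely the null-homotopy that the $d_2$ already hands you for free.

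As a secondary point, your inductive argument for the permanent-cycle half is more elaborate than necessary: as in the paper, this is a direct inspection of the $E_3$-page in Figure~\ref{fig:E3}, where there is no available target for a differential supported on $P^{2^{n-1}(2j+1)}v_n^2$. You also do not need the base case for $v_n^2$ to be ``handled by a separate degree argument'' --- the chart inspection is uniform in $j$.
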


\begin{pf}
Inspection of Figure \ref{fig:E3} shows that 
$P^{2^{n-1}(2j+1)} v_n^2$ cannot support a differential.

Lemma \ref{lem:ExtMassey2} implies that
$P^{2^{n-1}(2j+1)} v_n^2$ belongs to the Massey product
$\langle \rho, \rho^{2^{n+1}-2} v_{n+1}, P^{2^n j} v_n^2 \rangle$
in $\Ext_\R[h_1^{-1}]$.
In fact, the Massey product has no indeterminacy because of 
Remark \ref{rmk:noRho} and Lemma \ref{lem:Einfty-vanish}.

We will now apply Moss's convergence theorem \cite[Theorem~1.2]{Moss}
to this Massey product.
There is an Adams differential $d_2(P^{2^n j} v_{n+1}) = P^{2^n j} v_n^2$,
so $P^{2^n j} v_n^2$ detects the homotopy element $0$.
By inspection of Figure \ref{fig:E3},
$\rho^{2^{n+1} - 2} v_{n+1}$ is a permanent cycle;
let $\alpha$ be a homotopy element detected by it.
Moreover, $\rho \alpha$ is zero in homotopy because there are no
classes in higher filtration that could detect it.

Moss's convergence theorem says that
the Toda bracket $\langle \rho, \alpha, 0 \rangle$
contains an element that is detected by $P^{2^{n-1}(2j+1)} v_n^2$.
This Toda bracket consists entirely of multiples of $\rho$.
\end{pf}

\begin{lemma}\label{lem:d3}
\[ d_3( \rho^{2^{n-1}-1} P^{2^{n-1}k} v_n) = P^{2^{n-3}+2^{n-1}k} v_{n-2}^2\]
for $n\geq 4$. 
\end{lemma}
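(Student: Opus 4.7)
The plan is to force $P^{2^{n-3}+2^{n-1}k} v_{n-2}^2$ to be a boundary on the $E_3$-page by combining Lemma~\ref{lem:rhoDiv} with a bidegree inspection that rules out hidden $\rho$-extensions, and then to identify $\rho^{2^{n-1}-1} P^{2^{n-1}k} v_n$ as the only possible source. First I verify the bidegrees match: the source sits at $(mw,c) = (2^n-1+2^{n+1}k,\, 2^{n-1}+2^{n+1}k)$ and the target at $(2^n-2+2^{n+1}k,\, 2^{n-1}+2+2^{n+1}k)$, which differ by $(-1,+2)$, the correct shift for a $d_3$. The source lies on $E_3$: by Lemma~\ref{lem:Adams-d2},
\[
d_2\bigl(\rho^{2^{n-1}-1} P^{2^{n-1}k} v_n\bigr) = P^{2^{n-1}k} v_{n-1} \cdot \rho^{2^{n-1}-1} v_{n-1},
\]
and the second factor vanishes in $\Ext_\R[h_1^{-1}]$ because of the $\rho$-Bockstein relation $\rho^{2^{n-1}-1} v_{n-1} = 0$ (Lemma~\ref{lem:Bock-diff}).

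Next, applying Lemma~\ref{lem:rhoDiv} with $n \mapsto n-2$ and $j \mapsto 2k$ (valid for $n \geq 4$) identifies the target as a permanent cycle detecting an element of the Toda bracket $\langle \rho, \alpha, 0\rangle$, where $\alpha$ is a homotopy class detected by $\rho^{2^{n-1}-2} v_{n-1}$ with $\rho\alpha = 0$; in particular the detected class is $\rho$-divisible. An equivalent Massey product description in $\Ext_\R[h_1^{-1}]$, obtained from Lemma~\ref{lem:ExtMassey2} (with $n \mapsto n-2$, $m \mapsto n-1$, $k \mapsto 4k$) multiplied through by $v_{n-2}$, is
\[
P^{2^{n-3}+2^{n-1}k} v_{n-2}^2 = \langle P^{2^{n-1}k} v_{n-2}^2, \rho^{2^{n-1}-2} v_{n-1}, \rho\rangle.
\]

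The crucial step is to show that this detected $\rho$-divisible class actually vanishes, forcing the target to be a boundary. By Proposition~\ref{prop:Adams-E3} and the non-$\rho$-divisibility argument of Lemma~\ref{lem:noRho2}, the target is not a $\rho$-multiple on $E_3$. If the detected class were a nonzero $\rho\beta$, a hidden $\rho$-extension from a $\beta$-detecting $E_\infty$-cycle at bidegree $(2^n-2+2^{n+1}k,\, 2^{n-1}+1+2^{n+1}k)$ would be required, and inspection of Table~\ref{tbl:E3gen} in this bidegree rules out the existence of any such supporting class. Therefore the target detects zero in homotopy and is a boundary; by inspection of Figure~\ref{fig:E3} the only candidate source is $\rho^{2^{n-1}-1} P^{2^{n-1}k} v_n$ via $d_3$, with higher-$r$ sources excluded by the vanishing of $E_3$ at bidegree $(mw+1,\, c-r+1)$. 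The main obstacle is this bidegree inspection, which requires careful tracking of the $\rho$-torsion exponents and $P$-exponents of the generators in Table~\ref{tbl:E3gen} to rule out every potentially hidden $\rho$-extension.
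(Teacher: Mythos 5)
Your proposal is correct and follows essentially the same route as the paper's proof: invoke Lemma~\ref{lem:rhoDiv} to show the target $P^{2^{n-3}+2^{n-1}k}v_{n-2}^2$ detects a $\rho$-divisible class, rule out any nonzero such class by noting there is no $E_3$-page element in lower Chow degree in that Milnor-Witt stem, and conclude the target is a boundary whose only possible source is $\rho^{2^{n-1}-1}P^{2^{n-1}k}v_n$ via $d_3$. The paper is terser, relegating the bidegree inspections to ``apparent from Figure~\ref{fig:E3}'', while you spell them out; your intermediate Massey-product reformulation of the target via Lemma~\ref{lem:ExtMassey2} is extra but harmless, since the $\rho$-divisibility is already supplied by the Toda bracket in Lemma~\ref{lem:rhoDiv}.
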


\begin{pf}
Lemma \ref{lem:rhoDiv} shows that
$P^{2^{n-3} + 2^{n-1} k} v_{n-2}^2$ detects a class that is
divisible by $\rho$.
By inspection of Figure \ref{fig:E3},
there are no classes in lower filtration.
Therefore,
$P^{2^{n-3} + 2^{n-1} k} v_{n-2}^2$ must detect zero, i.e.,
must be hit by a differential.
It is apparent from Figure \ref{fig:E3} 
that there is only one possible differential.
\end{pf}

Lemma \ref{lem:Adams-higher}
describes the higher Adams differentials.

\begin{lemma}
\label{lem:Adams-higher}
For $n \geq r+1$ and $r \geq 3$,
\[
d_r ( \rho^{2^n - 2^{n-r+2} - r + 2} P^{2^{n-1} k} v_n ) =
P^{2^{n-1} k + 2^{n-2} - 2^{n-r}} v_{n-r+1}^2.
\]
\end{lemma}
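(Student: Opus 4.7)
The plan is to induct on $r$, using Lemma~\ref{lem:d3} as the base case $r = 3$, and at each stage mirroring the strategy of that proof: realize the target as a permanent cycle detecting a $\rho$-divisible homotopy class that must in fact vanish, then pin down the unique source by a bidegree computation.

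First, I would rewrite the target so that Lemma~\ref{lem:rhoDiv} applies directly. Setting $m = n - r + 1$, one has
\[
P^{2^{n-1}k + 2^{n-2} - 2^{n-r}} v_{n-r+1}^2 = P^{2^{m-1}(2^{r-1}k + 2^{r-2} - 1)} v_m^2,
\]
and since $r \geq 3$ the coefficient $2^{r-1}k + 2^{r-2} - 1$ is odd. Hence Lemma~\ref{lem:rhoDiv} guarantees that the target is a permanent cycle detecting a $\rho$-divisible class in $\hat\pi^\R_{*,*}[\eta^{-1}]$. A degree check against Table~\ref{tbl:E3gen}, in the style of Lemmas~\ref{lem:noRho} and~\ref{lem:noRho2} and using the inductive description of what survives to the $E_r$-page, shows that no element of strictly smaller Adams filtration sits in the bidegree required to detect a nontrivial $\rho$-quotient. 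The detected class is therefore zero in homotopy, so the target must be hit by a differential.

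By the inductive hypothesis the differentials $d_3, \dots, d_{r-1}$ are known, and a bidegree comparison rules out each of them as a possible hit on the target. Hence the target is hit by some $d_{r'}$ with $r' \geq r$. Since $d_{r'}$ shifts $(mw, c)$ by $(-1, r'-1)$, the source must lie in the column $mw = 2^{n+1}k + 2^n - 1$. A $2$-adic check on the Milnor-Witt degrees of the generators in Table~\ref{tbl:E3gen} shows that the only generator in this column is $\rho^{2^{n-1}-1} P^{2^{n-1}k} v_n$, so the source is a $\rho$-multiple of this class; matching the required Chow degree $T_c - r' + 1$ then identifies the source as $\rho^{2^n - 2^{n-r+2} - r + 2} P^{2^{n-1}k} v_n$ and forces $r' = r$.

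The main obstacle is the degree bookkeeping: I must verify both that the claimed source survives to the $E_r$-page (is not killed by any $d_s$ with $s < r$) and that no class of lower Adams filtration survives in the critical target bidegree. These checks reduce to $2$-adic calculations against Tables~\ref{tbl:BockGens} and~\ref{tbl:E3gen}, but they become more intricate as $r$ grows. A cleaner alternative would be to describe the full $E_r$-page inductively as the cohomology of a suitable filtered differential graded module, in the spirit of Lemma~\ref{lem:basic}, which would package the surviving classes and their $\rho$-torsions systematically and eliminate the need for piecemeal parity arguments.
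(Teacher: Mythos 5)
Your proposal is correct and follows essentially the same line of argument as the paper's proof, which is given there very tersely as "essentially the same as Lemma \ref{lem:d3}," invoking Lemma \ref{lem:rhoDiv} to force the $v_{n-r+1}^2$ classes to be hit and then noting that Figure \ref{fig:E3} admits only one pattern of differentials. Your rewriting of the target as $P^{2^{m-1}(2j+1)}v_m^2$ with $m = n-r+1$ and your $2$-adic identification of the unique possible source in the Milnor-Witt $(2^{n+1}k+2^n-1)$-column are the details the paper leaves to the reader, and both are carried out correctly.
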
 

\begin{proof}
The proof is essentially the same as the proof of 
Lemma \ref{lem:d3}.
In Milnor-Witt stem congruent to $2$ modulo $4$,
Lemma \ref{lem:rhoDiv} implies that every homotopy element
is divisible by $\rho$.
This implies that they must all be hit by differentials.
Figure \ref{fig:E3} indicates that there is just one possible pattern
of differentials.
\end{proof}

From Lemma \ref{lem:Adams-higher},
it is straightforward to derive the $h_1$-inverted Adams
$E_\infty$-page, as shown in Figure \ref{fig:Einfty}.

\begin{prop}\label{prop:EinfGen} The $h_1$-inverted Adams $E_\infty$-page is the $\cR$-module on generators given in Table~\ref{tbl:Einfgen} 
for $n\geq 2$.
\end{prop}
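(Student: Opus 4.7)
The plan is to extract the $E_\infty$-page from the Adams $E_3$-page of Proposition \ref{prop:Adams-E3} by quotienting by images and kernels of each $d_r$ with $r \geq 3$, as given by Lemmas \ref{lem:d3} and \ref{lem:Adams-higher}. Every such $d_r$ is $\cR$-linear, every non-trivial source is a $\rho$-multiple of a $v_n$-generator with $n \geq 4$, and every target is a $v_m^2$-class, so I will treat each $\cR$-orbit of an $E_3$-generator separately. The classes $1$, $P^{2k}v_2$, and $\rho^3 P^{4k}v_3$ neither support nor receive a differential, so they pass to $E_\infty$ with their original $\rho$-torsions from Table \ref{tbl:E3gen}.

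For each $n \geq 4$ and $k \geq 0$, I would track the differentials $d_3,\ldots,d_{n-1}$ on the $\rho$-orbit $\rho^b P^{2^{n-1}k}v_n$, with $b\in[2^{n-1}-1,\ 2^n-2]$. By $\cR$-linearity and Lemma \ref{lem:Adams-higher}, $d_r$ has source at $b = 2^n - 2^{n-r+2} - r + 2$ and kills a consecutive block of $2^{n-r+1}-1$ exponents, this being the $\rho$-torsion of the target $v_{n-r+1}^2$. A direct $2$-adic check shows that the top of the $d_r$-block is precisely one less than the source-exponent of $d_{r+1}$, so the blocks stack with neither gap nor overlap, and together they kill $\sum_{r=3}^{n-1}(2^{n-r+1}-1) = 2^{n-1}-n-1$ of the $2^{n-1}$ exponents of the generator orbit. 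The $n+1$ survivors form the sub-orbit $\{\rho^b P^{2^{n-1}k}v_n : b\in[2^n-n-2,\ 2^n-2]\}$, generated by $\rho^{2^n-n-2}P^{2^{n-1}k}v_n$ with $\rho$-torsion $n+1$. Separately, for each $m\geq 2$ and $j\geq 0$, a unique $(n,r,k)$ with $n-r+1 = m$ makes the target of the corresponding $d_r$ equal to $P^{2^{m-1}(2j+1)}v_m^2$, and the image of this $d_r$ on the full $E_r$-orbit of its source exhausts $\rho^c P^{2^{m-1}(2j+1)}v_m^2$ for $c$ in an interval containing $[0,\ 2^m-2]$; thus every $v_m^2$ class is killed.

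The principal obstacle is the $2$-adic bookkeeping for the $v_n$-telescoping, namely verifying the identity $(2^n-2^{n-r+1}-r)+1 = 2^n - 2^{n-(r+1)+2} - (r+1) + 2$ that equates the top of the $d_r$-block plus one with the source-exponent of $d_{r+1}$. Once this identity is in hand, both the $n+1$-element surviving sub-orbit for each $n \geq 4$ and the complete annihilation of each $v_m^2$-orbit follow from routine $\cR$-linearity arguments, and reading off the generators in the final list then gives Table \ref{tbl:Einfgen}.
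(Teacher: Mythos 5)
Your proposal is correct and matches the paper's intent: the paper gives no written proof here (it says only that the $E_\infty$-page is "straightforward to derive" from Lemma \ref{lem:Adams-higher}), and your $\cR$-linear bookkeeping — the telescoping of source blocks $[2^n-2^{n-r+2}-r+2,\ 2^n-2^{n-r+1}-r]$ across $r=3,\dots,n-1$, the $n+1$ surviving exponents, and the unique $(n,r,k)$ hitting each $P^{2^{m-1}(2j+1)}v_m^2$ via $j+1=2^{r-3}(2k+1)$ — is exactly the derivation the authors leave implicit; all the arithmetic checks out.
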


\begin{table}[ht]
\captionof{table}{$\cR$-module generators for the Adams $E_\infty$-page}
\label{tbl:Einfgen}
\begin{center}
\begin{tabular}{LLL} 
\hline
(mw,c) & \text{generator} & \text{$\rho$-torsion} \\ \hline
(0,0) & 1 &\infty \\
(3,1)+k(8,8) & P^{2k}v_2 & 3  \\
(7,4)+k(16,16) & \rho^3P^{4k}v_3 & 4  \\
(15,11) + k(32,32) & \rho^{10}P^{8k}v_4 & 5  \\
(2^n-1,2^n-n-1)+k(2^{n+1},2^{n+1}) & \rho^{2^n-n-2}P^{2^{n-1}k}v_n & n+1  \\
 \hline
\end{tabular}
\end{center}
\end{table}

\section{$\eta$-inverted homotopy groups}

From the $h_1$-inverted Adams $E_\infty$-page, it is a short
step to the $\eta$-inverted stable homotopy ring.
First we must choose generators.

\begin{defn}
\label{defn:gen}
For $k \geq 0$ and $n \geq 2$,
let $P^{2^{n-1} k} \lambda_n$ be an element of the Milnor-Witt
$(2^{n+1} k + 2^n - 1)$-stem that is detected by
$\rho^{2^n - n - 2} P^{2^{n-1} k} v_n$.
\end{defn}

There are choices in these definitions, which are measured by
Adams $E_\infty$-page elements in higher filtration.
For example, there are four possible choices for $\lambda_2$ because of the 
presence of $\rho v_2$ and $\rho^2 v_2$ in higher filtration.  

\begin{thm}
\label{thm:ring}
As a $\Z_2[\eta^{\pm 1}]$-module,
the $\eta$-inverted $\R$-motivic stable homotopy ring is
generated by $1$ and $P^{2^{n-1} k}\lambda_n$ for $n\geq 2$ and $k\geq 0$. The generator $P^{2^{n-1} k}\la_n$ lies in the Milnor-Witt 
$(2^{n+1} k +2^n-1)$-stem and is annihilated by $2^{n+1}$.
All products are zero, except for those involving $2$ or $\eta$.
\end{thm}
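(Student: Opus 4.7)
The plan is to read the theorem directly off the $h_1$-inverted Adams $E_\infty$-page computed in Proposition~\ref{prop:EinfGen} and Table~\ref{tbl:Einfgen}, the crucial additional ingredient being a hidden multiplicative extension that identifies $\rho$-action with $2\eta^{-1}$-action.

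First, Lemma~\ref{lem:Ext-finite} guarantees strong convergence of the $h_1$-inverted $\R$-motivic Adams spectral sequence: in each fixed Milnor-Witt stem only finitely many Adams filtrations carry non-zero classes, so every homotopy class is detected at $E_\infty$ and there is no room for hidden $\eta$-towers of infinite length. Combined with Definition~\ref{defn:gen}, this already shows that, as a module over $\F_2[\rho,\eta^{\pm 1}]$, $\hat{\pi}^\R_{*,*}[\eta^{-1}]$ is generated by $1$ together with the classes $P^{2^{n-1}k}\lambda_n$, and that each $P^{2^{n-1}k}\lambda_n$ lies in the Milnor-Witt $(2^{n+1}k+2^n-1)$-stem.

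The central step is to upgrade this $\F_2[\rho,\eta^{\pm 1}]$-module structure to a $\Z_2[\eta^{\pm 1}]$-module structure by establishing that $\rho = 2\eta^{-1}u$ for some $2$-adic unit $u$. Since Theorem~\ref{thm:intro}(1) identifies the $\eta$-inverted Milnor-Witt $0$-stem with $\Z_2[\eta^{\pm 1}]$ and $\rho$ lies in that stem in weight $-1$, we must have $\rho = c\eta^{-1}$ for some $c\in\Z_2$. The standard relation $h_0 = \rho h_1$ in $\Ext_\R$ shows that $2$ and $\rho\eta$ are detected by the same $E_\infty$-class; inspection of the $E_\infty$-page in tridegree $(0,0,0)$, where the surviving classes $h_0^n$ realise the $2$-adic filtration of $\Z_2$, then forces the $2$-adic valuation of $c$ to be exactly $1$. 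Granting this, the $\rho$-torsion of order $n+1$ on the generator $\rho^{2^n-n-2}P^{2^{n-1}k}v_n$ of Table~\ref{tbl:Einfgen} translates to $2$-torsion of order exactly $2^{n+1}$ on $P^{2^{n-1}k}\lambda_n$: the lower bound is the non-vanishing of $\rho^j\cdot\rho^{2^n-n-2}P^{2^{n-1}k}v_n$ on the $E_\infty$-page for $0\leq j\leq n$, and the upper bound combines $\rho^{2^n-1}P^{2^{n-1}k}v_n = 0$ with the absence of other $E_\infty$-generators in the relevant bidegree that could detect a non-zero $2^{n+1}$-multiple.

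Finally, to see that all other products vanish, I would invoke a Milnor-Witt parity argument. Every $\cR$-module generator in Table~\ref{tbl:Einfgen} lies either in Milnor-Witt degree $0$ (the unit) or in a strictly positive odd Milnor-Witt degree $2^{n+1}k+2^n-1$, while both $\rho$ and $h_1$ have Milnor-Witt degree $0$; hence the entire $h_1$-inverted $E_\infty$-page is concentrated in Milnor-Witt degree $0$ and in odd Milnor-Witt degrees. A product $P^{2^{n-1}k}\lambda_n\cdot P^{2^{m-1}j}\lambda_m$ therefore sits in a strictly positive even Milnor-Witt stem where $E_\infty$ is trivial, so the product must vanish in homotopy. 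The hardest step is the hidden extension $\rho\eta \equiv 2$ up to a unit; this is invisible on the $E_\infty$-page and genuinely relies on the independently established description of the Milnor-Witt $0$-stem.
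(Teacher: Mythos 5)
Your overall strategy is the right one --- read the module structure off the $E_\infty$-page of Proposition~\ref{prop:EinfGen} and Table~\ref{tbl:Einfgen}, convert $\rho$-divisibility into $2$-divisibility, and kill products by a Milnor--Witt parity argument. The last step is sound: every generator other than $1$ sits in Milnor--Witt stem $\equiv 3 \pmod 4$, any two-fold product lands in Milnor--Witt stem $\equiv 2 \pmod 4$ where the $E_\infty$-page is empty, and this is precisely what the paper does.

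The genuine gap is the ``central step.'' Your identification of $\rho$ with $2\eta^{-1}$ up to a unit rests on Theorem~\ref{thm:intro}(1), but Theorem~\ref{thm:intro}(1) is the summary statement whose content \emph{is} the present theorem; invoking it here is circular. Nor can this hidden extension be read off the $E_\infty$-page alone: the $h_1$-inverted page in Milnor--Witt stem~$0$ is just $\F_2[h_1^{\pm 1},\rho]$, and that associated graded is equally consistent with $\rho$ detecting $2\eta^{-1}$ or with $\rho$ detecting a class that merely has Adams filtration~$1$ but is not $2$ times a unit. (Also, a side issue: there is no relation $h_0 = \rho h_1$ in $\Ext_\R$; after inverting $h_1$ one in fact has $h_0 = 0$ because $h_0h_1 = 0$, so the ``surviving classes $h_0^n$'' you appeal to are not present in this spectral sequence.) The paper resolves the hidden extension by importing an \emph{external} geometric input: Morel's computation of the $0$-line of $\pi^\R_{*,*}$ gives the relation $\rho\eta^2 = -2\eta$ in $\pi^\R_{1,1}$, so after inverting $\eta$ one has $\rho = -2\eta^{-1}$, identifying $\rho$ and $2$ up to the unit $-\eta^{-1}$. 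That one-line citation is the missing ingredient; without it, or an equivalent independent determination of the hidden $\rho$-extension in Milnor--Witt stem~$0$, the passage from $\F_2[\rho,\eta^{\pm 1}]$-module structure to $\Z_2[\eta^{\pm 1}]$-module structure does not go through.
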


\begin{proof}
In the $\eta$-inverted stable homotopy ring,
$\rho$ and $2$ differ by a unit because $\rho \eta^2 = - 2 \eta$ \cite{Morel}. Therefore, the $\rho$-torsion information given in Proposition~\ref{prop:EinfGen} translates to $2$-torsion information in homotopy.

Except for $1$, all $\Z_2 [\eta^{\pm 1}]$-module generators
lie in Milnor-Witt stems that are congruent to $3$ modulo $4$.
Therefore, such generators must multiply to zero.
\end{proof}

Table~\ref{tbl:Htpygen} lists all generators through the Milnor-Witt $63$-stem.  The table also identifies Toda brackets that contain each generator.
These Toda brackets are computed in Section \ref{sctn:Toda}.

\begin{table}[ht]
\captionof{table}{$\Z_2[\eta^{\pm 1}]$-module generators for $\hat\pi^{\R}_{*,*}[\eta^{-1}]$
\label{tbl:Htpygen}}
\begin{center}
\begin{tabular}{LLLLLL} 
\hline
mw & \text{$E_\infty$} & \text{$\hat\pi^\R_{*,*}[\eta^{-1}]$} & \text{$2^k$-torsion} & \text{bracket} & \text{indeterminacy} \\ \hline
0 & 1 & 1 &\infty & & \\
3 & v_2 & \la_2 & 3 & &  \\
7 & \rho^3v_3 & \la_3 & 4 &\langle 2^3, \lambda_2, \lambda_2 \rangle &
   2^3\la_3 \\
11 & P^2 v_2 & P^2\la_2 & 3 & \bracket{ 2^4, \la_3, \la_2} & \\
15 & \rho^{10}v_4 & \la_4 & 5 & \bracket{ 2^3, \la_2, P^2\la_2} 
   & 2^3 \la_{4}\\
19 & P^4 v_2 & P^4 \la_2 & 3 & \bracket{ 2^5, \la_{4}, \la_2} & \\
23 & \rho^3P^4 v_3 & P^4 \la_3 & 4 & \bracket{ 2^5, \la_{4}, \la_3} & \\
27 & P^6 v_2 & P^6 \la_2 & 3 & \bracket{ 2^5, \la_{4}, P^2\la_{2}} & \\
31 & \rho^{25} v_5 & \la_5 & 6 & \bracket{ 2^3, \la_2, P^6\la_{2}} 
   & 2^3 \la_{5}\\
35 & P^8 v_2 & P^8 \la_2 & 3 & \bracket{2^6 ,\la_5 ,\la_2 } & \\
39 & \rho^3 P^8 v_3 & P^8 \la_3 & 4 & \bracket{2^6, \la_5, \la_3 } & \\
43 & P^{10} v_2 & P^{10} \la_2 & 3 & \bracket{2^6, \la_5, P^2\la_2} & \\
47 & \rho^{10}P^8 v_4 & P^8 \la_4 & 5 & \bracket{2^6, \la_5, \la_4} & \\
51 & P^{12}v_2 & P^{12}\la_2 & 3 & \bracket{2^6, \la_5, P^4\la_2} & \\
55 & \rho^3P^{12}v_3 & P^{12}\la_3 & 4 & \bracket{2^6, \la_5, P^4\la_3} &\\
59 & P^{14}v_2 & P^{14}\la_2 & 3 & \bracket{2^6, \la_5, P^6\la_2} & \\
63 & \rho^{56}v_6 & \la_6 & 7 & \bracket{2^3, \la_2, P^{14}\la_2} 
   & 2^3 \la_6\\
 \hline
\end{tabular}
\end{center}
\end{table}

Table \ref{tbl:Htpygen} also reveals a pattern that matches the classical
image of $J$.

\begin{cor}
\label{cor:ImJ}
If $k > 1$, then the $\eta$-inverted Milnor-Witt $(4k-1)$-stem
is isomorphic to $\Z / 2^{u+1} [ \eta^{\pm 1}]$ as a module over
$\Z_2 [\eta^{\pm 1}]$, where $u$ is the $2$-adic valuation of $4k$.
\end{cor}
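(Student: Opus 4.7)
The plan is to deduce Corollary~\ref{cor:ImJ} as a direct consequence of Theorem~\ref{thm:ring}, which already lists all $\Z_2[\eta^{\pm 1}]$-module generators of $\hat\pi_{*,*}^\R[\eta^{-1}]$ together with their annihilators and multiplicative structure. The only new work is an arithmetic identification of which generator lands in the Milnor-Witt $(4k-1)$-stem, followed by a routine bookkeeping step.

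First I would solve the Diophantine equation $2^{n+1} j + 2^n - 1 = 4k - 1$, equivalently $2^n(2j+1) = 4k$, subject to the constraints $n \geq 2$ and $j \geq 0$ dictated by the indexing of generators in Theorem~\ref{thm:ring}. Because $2j+1$ is odd, uniqueness of the $2$-adic factorization of $4k$ forces $n = u$, where $u$ is the $2$-adic valuation of $4k$, together with $j = (4k/2^u - 1)/2$. The hypothesis $k > 1$ gives $4k \geq 8$, so automatically $u \geq 2$ and $j \geq 0$. In particular, precisely one generator from Theorem~\ref{thm:ring}, namely $P^{2^{u-1} j}\la_u$, lies in the Milnor-Witt $(4k-1)$-stem.

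Next I would feed this identification back into Theorem~\ref{thm:ring}: the generator $P^{2^{u-1} j}\la_u$ is annihilated by $2^{u+1}$, and all of its products with other Milnor-Witt generators vanish. Since $\eta$ is invertible, the cyclic $\Z_2[\eta^{\pm 1}]$-submodule it generates is isomorphic to $\Z_2[\eta^{\pm 1}]/(2^{u+1}) \iso \Z/2^{u+1}[\eta^{\pm 1}]$, and by the classification in Theorem~\ref{thm:ring} this submodule exhausts the Milnor-Witt $(4k-1)$-stem.

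There is essentially no obstacle: the heavy lifting was done in Theorem~\ref{thm:ring}, and the corollary is a packaging statement. The only subtlety worth emphasizing is the uniqueness assertion, namely that no two distinct pairs $(n,j)$ with $n \geq 2$ and $j \geq 0$ can share a Milnor-Witt stem of the form $4k-1$; this follows immediately from uniqueness of the $2$-adic factorization used above, and so the stem is genuinely cyclic rather than a direct sum of cyclics.
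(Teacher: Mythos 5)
Your argument is correct and is exactly the (unwritten) justification the paper has in mind: Corollary~\ref{cor:ImJ} is stated immediately after Theorem~\ref{thm:ring} and Table~\ref{tbl:Htpygen} with no separate proof, since once the generators $P^{2^{n-1}k}\lambda_n$, their Milnor-Witt stems $2^n(2k+1)-1$, and their orders $2^{n+1}$ are known, the identification of the $(4k-1)$-stem reduces to the $2$-adic factorization you perform. The only cosmetic point is that the hypothesis $k>1$ plays no role in your arithmetic (the case $k=1$ gives $\lambda_2$ in the $3$-stem and still matches $u=2$), so that restriction in the statement is not something your proof needs to invoke.
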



\section{Toda brackets}
\label{sctn:Toda}

Even though its primary multiplicative structure 
is uninteresting, 
the $\eta$-inverted $\R$-motivic 
stable homotopy ring has rich higher structure in the
form of Toda brackets.  We will explore some of the 3-fold Toda bracket
structure.
In particular, we will show that all of the generators can be 
inductively
constructed via Toda brackets, starting from just $2$ and $\lambda_2$.
Table~\ref{tbl:Htpygen} lists one possible Toda bracket decomposition
for each generator up to the Milnor-Witt 63-stem.

We observed in the proof of Theorem \ref{thm:ring} that
the element $\rho$ of the Adams $E_\infty$-page detects
the element $2$ of the $\eta$-inverted
stable homotopy ring.  We will use this fact frequently
in the following results.

\begin{lemma}\label{lem:bracketone} 
The Toda bracket $\bracket{2^3,\la_2,\la_2}$ 
contains an element detected by $\rho^3 v_3$ in the Milnor-Witt $7$-stem,
and its indeterminacy is detected by $\rho^6 v_3$.
\end{lemma}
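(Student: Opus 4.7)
The plan is to apply Moss's convergence theorem \cite[Theorem~1.2]{Moss} to the Massey product $\langle \rho^3, v_2, v_2 \rangle$ on the $h_1$-inverted $\R$-motivic Adams $E_3$-page, and then compute the Toda bracket's indeterminacy directly from Theorem~\ref{thm:intro}.

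First, I would verify that $\langle 2^3, \lambda_2, \lambda_2 \rangle$ is defined: by Theorem~\ref{thm:ring}, $\lambda_2$ is $2^3$-torsion and all products of the $\lambda$-generators vanish, so $\lambda_2 \cdot \lambda_2 = 0$. The detecting classes are $\rho^3$ (which detects $2^3$, since $\rho$ detects $2$ after $\eta$-inversion) and $v_2$ (which detects $\lambda_2$ by Definition~\ref{defn:gen}).

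Next, I would produce a representative of the Massey product at the $E_3$-page. The relation $\rho^3 v_2 = 0$ holds already on $E_2$, so that product is witnessed by the trivial cochain. The product $v_2 \cdot v_2 = v_2^2$ is nonzero on $E_2$, but by Lemma~\ref{lem:Adams-d2} it is the target of the differential $d_2(v_3) = v_2^2$. The standard spectral-sequence recipe for a Massey product then yields $\rho^3 \cdot v_3 + 0 \cdot v_2 = \rho^3 v_3$ as a representative of $\langle \rho^3, v_2, v_2 \rangle_3$. This element is a $d_2$-cycle (since $d_2(\rho^3 v_3) = \rho^3 v_2^2 = 0$) and survives to $E_\infty$ as a nonzero generator listed in Table~\ref{tbl:Einfgen}. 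Moss's theorem therefore guarantees that $\langle 2^3, \lambda_2, \lambda_2 \rangle$ contains an element detected by $\rho^3 v_3$.

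For the indeterminacy, the Toda bracket ambiguity is $2^3 \cdot \hat\pi^\R_{*,*}[\eta^{-1}] + \hat\pi^\R_{*,*}[\eta^{-1}] \cdot \lambda_2$ in the Milnor-Witt $7$- and $4$-stems respectively. By Theorem~\ref{thm:intro}, the Milnor-Witt $4$-stem vanishes, killing the second summand. The Milnor-Witt $7$-stem is cyclic of order $16$ generated by $\lambda_3$, so the first summand is $\{0, 8\lambda_3\}$. Since $\lambda_3$ is detected by $\rho^3 v_3$, the element $8\lambda_3$ is detected by $\rho^3 \cdot \rho^3 v_3 = \rho^6 v_3$, which matches the stated indeterminacy. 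The main obstacle will be carefully verifying Moss's no-crossing-differentials hypothesis: since the only relevant Adams differential for this Massey product is $d_2(v_3) = v_2^2$ and $\rho^3 v_3$ is itself a permanent cycle, this reduces to routine degree bookkeeping with Figure~\ref{fig:E3}.
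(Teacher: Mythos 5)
Your proof is correct and follows essentially the same path as the paper's: apply Moss's convergence theorem using the Adams differential $d_2(v_3) = v_2^2$ (together with the Ext relation $\rho^3 v_2 = 0$) to get a representative $\rho^3 v_3$, and then read off the indeterminacy from the vanishing of the Milnor-Witt $4$-stem and the fact that the only nonzero multiple of $2^3$ in the Milnor-Witt $7$-stem is $8\lambda_3$, detected by $\rho^6 v_3$. The only nit is notational: the Massey product you form is the one on the $E_2$-page (where the witness $v_3$ with $d_2(v_3)=v_2^2$ lives), so writing $\langle\rho^3,v_2,v_2\rangle_2$ would be more accurate than $\langle\rho^3,v_2,v_2\rangle_3$.
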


\begin{pf}
Moss's convergence theorem \cite[Theorem~1.2]{Moss}
and the differential $d_2(v_3) = v_2^2$ show that
$\bracket{2^3, \la_2, \la_2}$
is detected by $\rho^3 v_3$.

The indeterminacy follows from the facts that there are no 
multiples of $\lambda_2$ and that there is a unique multiple
of $2^{3}$ in the Milnor-Witt $7$-stem.
\end{pf}


\begin{rmk}
\label{rmk:bracketone}
The proof of Lemma \ref{lem:bracketone} 
applies just as well to show that $\bracket{2^4, \la_3, \la_3}$ 
is detected by $\rho^{10} v_4$ in the Milnor-Witt 15-stem.  
In higher stems, the analogous brackets do not produce generators.
For example, 
the Massey product $\bracket{\rho^5, \rho^{10}v_4,\rho^{10}v_4}$ is already defined in $\Ext$, which implies that the corresponding Toda bracket 
must be detected in filtration least $27$.  However, $\rho^{25}v_5$ 
detects the generator of the Milnor-Witt 31-stem, and it 
lies in filtration $26$.
\end{rmk}

\begin{lemma}
\label{lem:lambdan-bracket}
For $n \geq 2$,
the Toda bracket
$\langle 2^3, \lambda_2, P^{2^{n-1} - 2} \lambda_2 \rangle$ 
is detected by $\rho^{2^{n+1} - n - 3} v_{n+1}$.
The indeterminacy in this Toda bracket
is generated by $2^3 \lambda_{n+1}$.
\end{lemma}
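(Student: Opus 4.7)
The plan is to apply Moss's convergence theorem \cite{Moss} to the Massey product $\langle \rho^3, v_2, P^{2^{n-1}-2} v_2 \rangle$ in the $h_1$-inverted $\R$-motivic Adams spectral sequence, where the three entries detect $2^3$, $\lambda_2$, and $P^{2^{n-1}-2}\lambda_2$ respectively (the fact that $\rho^3$ detects $2^3$ is justified in the proof of Theorem \ref{thm:ring}). The case $n = 2$ is precisely Lemma \ref{lem:bracketone}, so I focus on $n \geq 3$.

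The first step is to identify the null-homotopies needed to form the Massey product. Since $\rho^3 v_2 = 0$ already in $\Ext_\R[h_1^{-1}]$ by the $\rho$-torsion relation, this null-homotopy is trivial. The other product $v_2 \cdot P^{2^{n-1}-2} v_2 = P^{2^{n-1}-2} v_2^2$ is nonzero in $\Ext$, but Lemma \ref{lem:Adams-higher} applied with $N = n+1$, $r = n$, and $k = 0$ yields the Adams differential
\[
d_n\bigl(\rho^{2^{n+1}-n-6} v_{n+1}\bigr) = P^{2^{n-1}-2} v_2^2,
\]
so $\rho^{2^{n+1}-n-6} v_{n+1}$ supplies a chain-level null-homotopy at the $E_n$-stage.

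Feeding these null-homotopies into Moss's theorem, I would conclude that the Toda bracket $\langle 2^3, \lambda_2, P^{2^{n-1}-2}\lambda_2 \rangle$ contains an element detected by
\[
\rho^3 \cdot \rho^{2^{n+1}-n-6} v_{n+1} = \rho^{2^{n+1}-n-3} v_{n+1},
\]
which by Definition \ref{defn:gen} is precisely the detecting class of $\lambda_{n+1}$. The crossing-differential hypothesis of Moss's theorem is vacuous because $\rho^{2^{n+1}-n-3} v_{n+1}$ is a generator of the $E_\infty$-page by Table \ref{tbl:Einfgen}, so it is neither the source nor the target of any Adams differential.

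Finally I compute the indeterminacy. The general formula for a $3$-fold Toda bracket gives
\[
2^3 \cdot (\text{Milnor-Witt }(2^{n+1}-1)\text{-stem}) + (\text{Milnor-Witt }4\text{-stem}) \cdot P^{2^{n-1}-2}\lambda_2,
\]
with stems taken in $\hat\pi^\R_{*,*}[\eta^{-1}]$. By Theorem \ref{thm:intro}(3), the $\eta$-inverted Milnor-Witt $4$-stem vanishes, so the second summand is zero. By Corollary \ref{cor:ImJ}, the Milnor-Witt $(2^{n+1}-1)$-stem is cyclic over $\Z_2[\eta^{\pm 1}]$ with generator $\lambda_{n+1}$, so the first summand is generated by $2^3 \lambda_{n+1}$, as claimed. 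The main technical subtlety is justifying Moss's theorem in this setting, where one null-homotopy lies at $E_2$ and the other at $E_n$; this is handled by the standard chain-level (cobar-complex) interpretation of Massey products, where a defining system is assembled from null-cochains at different Adams filtrations.
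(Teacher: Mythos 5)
Your proposal is correct and follows essentially the same route as the paper: Moss's convergence theorem applied to the Massey product coming from the Adams differential $d_n(\rho^{2^{n+1}-n-6}v_{n+1}) = P^{2^{n-1}-2}v_2^2$ (which you correctly locate as a special case of Lemma~\ref{lem:Adams-higher} for $n\geq 3$, and Lemma~\ref{lem:bracketone} for $n=2$). You fill in the indeterminacy computation, which the paper asserts but does not spell out, and that part is right: the Milnor-Witt $4$-stem vanishes and the Milnor-Witt $(2^{n+1}-1)$-stem is cyclic on $\lambda_{n+1}$.

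One small caveat: your justification of the no-crossing-differentials hypothesis is not quite the right statement. The hypothesis concerns Adams differentials in the surrounding bidegrees that would interfere with the defining system, not merely whether the output class $\rho^{2^{n+1}-n-3}v_{n+1}$ itself survives to $E_\infty$. In this spectral sequence the differentials are completely known (Lemmas~\ref{lem:Adams-d2}--\ref{lem:Adams-higher} and Figure~\ref{fig:E3}), so the hypothesis can be checked by inspection, but the fact that the target is an $E_\infty$ generator does not by itself settle it. The paper omits this check entirely, so your sketch is at least as careful as the original.
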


\begin{proof}
This follows from Moss's convergence theorem \cite[Theorem~1.2]{Moss},
together with the Adams differential
$d_n ( \rho^{2^{n+1} - n - 6} v_{n+1} ) = P^{2^{n-1} - 2} v_2^2$.
\end{proof}

\begin{lemma}
\label{lem:lambda2-bracket}
For $n \geq 2$,
the Toda bracket
$\langle 2^{n+2}, \lambda_{n+1}, P^{2^{n-1} - 2} \lambda_2 \rangle$
is detected by $P^{2^n-2} v_2$.
The Toda bracket has no indeterminacy.
\end{lemma}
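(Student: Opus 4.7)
The plan is to apply Moss's convergence theorem \cite[Theorem~1.2]{Moss} to reduce the Toda bracket identification to a Massey product calculation in $\Ext_\R[h_1^{-1}]$, then to identify that Massey product via Lemma~\ref{lem:ExtMassey2} together with an iterated Stasheff juggling argument. First I would check that the Toda bracket is defined: by Theorem~\ref{thm:ring}, $\lambda_{n+1}$ has order exactly $2^{n+2}$, so $2^{n+2}\lambda_{n+1}=0$, and $\lambda_{n+1}\cdot P^{2^{n-1}-2}\lambda_2=0$ since distinct generators of $\hat\pi^\R_{*,*}[\eta^{-1}]$ multiply to zero.

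Next I would apply Moss's theorem to the triple of permanent cycles $\rho^{n+2}$, $\rho^{2^{n+1}-n-3}v_{n+1}$, $P^{2^{n-1}-2}v_2$ detecting $2^{n+2}$, $\lambda_{n+1}$, and $P^{2^{n-1}-2}\lambda_2$. Both consecutive products vanish in $\Ext_\R[h_1^{-1}]$: the first is $\rho^{2^{n+1}-1}v_{n+1}=0$, and the second is annihilated because $\rho^3\cdot P^{2^{n-1}-2}v_2=0$ together with the inequality $2^{n+1}-n-3\geq 3$ for $n\geq 2$. Thus the Massey product
\[
M \;=\; \langle\rho^{n+2},\,\rho^{2^{n+1}-n-3}v_{n+1},\,P^{2^{n-1}-2}v_2\rangle
\]
is defined in $\Ext_\R[h_1^{-1}]$, and Moss's theorem shows the Toda bracket contains an element detected by it.

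To evaluate $M$, I would peel off powers of $\rho$ from the first entry using the Stasheff juggling identity. Writing $\rho^{n+2}=\rho^{n+1}\cdot\rho$, set $a=\rho^{n+1}$, $b=\rho$, $c=\rho^{2^{n+1}-n-3}v_{n+1}$, $d=P^{2^{n-1}-2}v_2$. Then $abc=\rho^{2^{n+1}-1}v_{n+1}=0$, $cd=0$, and $bcd=0$, so both $\langle ab,c,d\rangle$ and $\langle a,bc,d\rangle$ are defined; moreover $\langle a,b,cd\rangle=\langle\rho^{n+1},\rho,0\rangle$ is pure indeterminacy. The Stasheff identity then yields
\[
M \;=\; \langle\rho^{n+1},\,\rho^{2^{n+1}-n-2}v_{n+1},\,P^{2^{n-1}-2}v_2\rangle
\]
modulo indeterminacy. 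Iterating this step a total of $n+1$ times ultimately gives $M=\langle\rho,\,\rho^{2^{n+1}-2}v_{n+1},\,P^{2^{n-1}-2}v_2\rangle$ modulo indeterminacy, which by reversal of Lemma~\ref{lem:ExtMassey2} equals $P^{2^n-2}v_2$.

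Finally I would show the Toda bracket has no indeterminacy. One piece of the indeterminacy lies in the Milnor-Witt $2^{n+1}$-stem, which vanishes because it is even and positive (Theorem~\ref{thm:intro}). The other piece, $2^{n+2}\cdot\hat\pi^\R_{*,*}[\eta^{-1}]$ in the Milnor-Witt $(2^{n+2}-5)$-stem, vanishes because that stem is cyclic of order $2^3$ by Corollary~\ref{cor:ImJ}, and so is annihilated by $2^{n+2}$ when $n\geq 2$. The main technical obstacle is the iterated Stasheff step: at each iteration one must verify vanishing of the relevant triple products and that the ``third bracket'' is pure indeterminacy, which reduces to the easy inequality $2^{n+1}-n-3+i\geq 3$ for $i$ the iteration index, which holds throughout for $n\geq 2$.
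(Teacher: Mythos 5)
Your high-level strategy matches the paper's: both proofs invoke Moss's convergence theorem and reduce the claim to identifying the Massey product $M=\langle\rho^{n+2},\rho^{2^{n+1}-n-3}v_{n+1},P^{2^{n-1}-2}v_2\rangle$ in $\Ext_\R[h_1^{-1}]$. Where you diverge is in how $M$ is evaluated. The paper simply re-runs the argument of Lemma~\ref{lem:ExtMassey2}: apply May's convergence theorem to the $\rho$-Bockstein differential $d^\rho_{2^{n+1}-1}(P^{2^{n-1}})=\rho^{2^{n+1}-1}v_{n+1}$, using the factorization $\rho^{2^{n+1}-1}v_{n+1}=\rho^{n+2}\cdot\rho^{2^{n+1}-n-3}v_{n+1}$; this immediately identifies $P^{2^n-2}v_2$ as an element of $M$ in one step. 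You instead take the literal statement of Lemma~\ref{lem:ExtMassey2} as the base case and try to reach $M$ by iterated Stasheff juggling, shifting one power of $\rho$ at a time across the bracket. That buys you nothing over the direct May-convergence argument and costs $n+1$ juggling steps that are individually delicate.

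There is a genuine soft spot in the juggling route. Each Stasheff step only yields that $\langle ab,c,d\rangle$ and $\langle a,bc,d\rangle$ share a common element (in characteristic $2$, from a common choice of cochain nullhomotopies); it does not equate the two brackets as sets unless you control their indeterminacies. Chaining $n+1$ such ``equalities modulo indeterminacy'' without establishing that every intermediate bracket is a singleton does not, on its own, pin down that $P^{2^n-2}v_2\in M$. You can repair this quickly: each bracket in your chain has the same first- and third-slot degree constraints, and a Chow-degree count (mimicking Lemma~\ref{lem:Einfty-vanish} and Remark~\ref{rmk:noRho}, or the estimate that the first-slot indeterminacy would have to live in negative Chow degree) shows each has zero indeterminacy, making the chain valid. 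But once you do that Chow-degree analysis, you are essentially verifying the hypotheses for a single application of May's convergence theorem with the $(n+2,\,2^{n+1}-n-3)$ split, so the juggling becomes superfluous. Your homotopy-level indeterminacy argument for the Toda bracket itself (via Theorem~\ref{thm:intro}/Corollary~\ref{cor:ImJ}) is correct and is a reasonable alternative to what the paper leaves implicit.
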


\begin{proof}
Lemma \ref{lem:ExtMassey2} implies that there is a Massey product
\[
P^{2^n-2} v_2 = \langle \rho^{n+2}, \rho^{2^{n+1} - n - 3} v_{n+1},
P^{2^{n-1} - 2} v_2 \rangle,
\]
with no indeterminacy.
Moss's convergence theorem \cite[Theorem~1.2]{Moss}
establishes the desired result.
\end{proof}

\begin{lemma}\label{bracketthree} 
If $m>n \geq 2$, then
the Toda bracket $\bracket{2^{m+1},\la_m,P^{2^{n-1} k}\la_n}$ 
is detected by $\rho^{2^n - n - 2} P^{2^{m-2} + 2^{n-1} k} v_n$.
The Toda bracket has no indeterminacy.
\end{lemma}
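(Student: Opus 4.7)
The plan is to follow the template of Lemma \ref{lem:lambda2-bracket}: exhibit a Massey product in $\Ext_\R[h_1^{-1}]$ whose Moss-convergence image is the required Toda bracket, and then use Theorem \ref{thm:ring} to see that the indeterminacy vanishes.

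The central step is to establish
\[
\rho^{2^n-n-2} P^{2^{m-2}+2^{n-1}k} v_n \in \langle \rho^{m+1},\, \rho^{2^m-m-2} v_m,\, \rho^{2^n-n-2} P^{2^{n-1}k} v_n \rangle
\]
in $\Ext_\R[h_1^{-1}]$. This will follow from May's convergence theorem applied to the $\rho$-Bockstein differential $d^\rho_{2^m-1}(P^{2^{m-2}}) = \rho^{2^m-1} v_m$ of Lemma \ref{lem:Bock-diff}, using the factorization $\rho^{2^m-1} = \rho^{m+1} \cdot \rho^{2^m-m-2}$. To confirm the Massey product is defined, I would check that the middle product $\rho^{2^m-m-2} v_m \cdot \rho^{2^n-n-2} P^{2^{n-1}k} v_n$ vanishes: its total $\rho$-exponent equals $2^m+2^n-m-n-4$, which is at least $2^n-1$ since $2^m \geq m+n+3$ whenever $m > n \geq 2$, whence the relation $\rho^{2^n-1} P^{2^{n-1}k} v_n = 0$ applies. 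The absence of crossing Bockstein differentials is automatic since all participating classes lie in nonnegative $\rho$-filtration, as in Lemmas \ref{lem:ExtMassey} and \ref{lem:ExtMassey2}.

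Moss's convergence theorem \cite[Theorem~1.2]{Moss} then transfers this to the homotopy statement: because $\rho^{m+1}$, $\rho^{2^m-m-2} v_m$, and $\rho^{2^n-n-2} P^{2^{n-1}k} v_n$ detect $2^{m+1}$, $\lambda_m$, and $P^{2^{n-1}k}\lambda_n$ respectively, the Toda bracket $\langle 2^{m+1}, \lambda_m, P^{2^{n-1}k}\lambda_n \rangle$ contains an element detected by $\rho^{2^n-n-2} P^{2^{m-2}+2^{n-1}k} v_n$.

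Finally, the Toda bracket indeterminacy decomposes as $2^{m+1}\cdot \hat\pi^\R_{2^m+2^n+k\cdot 2^{n+1}-1,*}[\eta^{-1}] + \hat\pi^\R_{2^m,*}[\eta^{-1}]\cdot P^{2^{n-1}k}\lambda_n$. The second summand vanishes because the Milnor-Witt $2^m$-stem is divisible by $4$, hence zero by Theorem \ref{thm:ring}. The first summand involves the Milnor-Witt $(4k'-1)$-stem with $k' = 2^{m-2}+2^{n-2}+k\cdot 2^{n-1}$; since $m > n$ one computes $v_2(k') = n-2$, so Theorem \ref{thm:ring} gives a cyclic group of order $2^{n+1}$, which is annihilated by multiplication by $2^{m+1} \geq 2^{n+2}$. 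The main obstacle is the careful 2-adic bookkeeping and the verification of May's hypothesis for the $\rho$-Bockstein spectral sequence; beyond these, the argument is a direct adaptation of the earlier bracket computations.
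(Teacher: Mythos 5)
Your proof is correct and follows the same strategy as the paper's: use May's convergence theorem on the $\rho$-Bockstein differential $d^\rho_{2^m-1}(P^{2^{m-2}}) = \rho^{2^m-1}v_m$ (with the factorization $\rho^{2^m-1} = \rho^{m+1}\cdot\rho^{2^m-m-2}$) to produce the Massey product in $\Ext_\R[h_1^{-1}]$, then transfer it to homotopy via Moss's convergence theorem. The paper's proof is terser—it cites Lemma \ref{lem:ExtMassey2} (which technically records a different factorization) and leaves the Massey-product-is-defined check and the indeterminacy bookkeeping implicit—whereas you spell those out explicitly, which is a virtue, not a deviation.
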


\begin{pf}
Lemma \ref{lem:ExtMassey2} implies that there is a Massey product
\[  \rho^{2^n-n-2}P^{2^{m-2}+2^{n-1} k }v_n =  
\bracket{ \rho^{m+1}, \rho^{2^m-m-2}v_m, \rho^{2^n-n-2}P^{2^{n-1} k }v_n}.\]
Moss's convergence theorem \cite[Theorem~1.2]{Moss}
establishes the desired result.
\end{pf}

\begin{prop}
Every generator $P^{2^{n-1} k} \lambda_n$ of
the $\eta$-inverted $\R$-motivic stable homotopy ring can be constructed via
iterated 3-fold Toda brackets starting from $2$ and $\lambda_2$.
\end{prop}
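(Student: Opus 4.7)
The plan is an induction that constructs the generators in an order ensuring each new generator arises as a 3-fold Toda bracket whose two non-scalar entries have been previously constructed. The three recursive moves come from Lemmas \ref{lem:lambdan-bracket}, \ref{lem:lambda2-bracket}, and \ref{bracketthree}: the first expresses $\lambda_{n+1}\in\langle 2^3,\lambda_2,P^{2^{n-1}-2}\lambda_2\rangle$, the second expresses $P^{2^n-2}\lambda_2=\langle 2^{n+2},\lambda_{n+1},P^{2^{n-1}-2}\lambda_2\rangle$, and the third expresses $P^{2^{n-1}k+2^{m-2}}\lambda_n=\langle 2^{m+1},\lambda_m,P^{2^{n-1}k}\lambda_n\rangle$ whenever $m>n$.

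First I would interleave the first two moves to generate the ``staircase'' $\lambda_2$, $\lambda_3$, $P^2\lambda_2$, $\lambda_4$, $P^6\lambda_2$, $\lambda_5$, $P^{14}\lambda_2$, and so on: at stage $n$, Lemma \ref{lem:lambdan-bracket} produces $\lambda_{n+1}$ from $\lambda_2$ and the previously built $P^{2^{n-1}-2}\lambda_2$, and then Lemma \ref{lem:lambda2-bracket} produces $P^{2^n-2}\lambda_2$ from $\lambda_{n+1}$ and $P^{2^{n-1}-2}\lambda_2$. A straightforward induction on $n$ shows this procedure yields $\lambda_n$ for every $n\geq 2$ and $P^{2^n-2}\lambda_2$ for every $n\geq 1$, built entirely from $2$ and $\lambda_2$.

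Once every $\lambda_n$ is available, I would construct a general $P^{2^{n-1}k}\lambda_n$ with $k\geq 1$ via a binary expansion. Write $2^{n-1}k=\sum_{i=1}^{r}2^{b_i}$ with $n-1\leq b_1<\cdots<b_r$, and apply Lemma \ref{bracketthree} successively with $m_i=b_i+2$ (so that $m_i>n$ and $2^{m_i-2}=2^{b_i}$), each bracket adding one more bit to the current $P$-exponent. After $r$ steps the output is the desired generator, and each intermediate $\lambda_{m_i}$ has already been supplied by the staircase.

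The only subtlety is indeterminacy. Lemmas \ref{lem:lambda2-bracket} and \ref{bracketthree} have zero indeterminacy, while Lemma \ref{lem:lambdan-bracket}'s indeterminacy is generated by $2^3\lambda_{n+1}$. This is harmless, because every element of that bracket is a valid generator in the sense of Definition \ref{defn:gen}: at each stage I would simply \emph{define} $\lambda_{n+1}$ to be a chosen representative of $\langle 2^3,\lambda_2,P^{2^{n-1}-2}\lambda_2\rangle$. With this compatible choice of representatives throughout, the entire family of generators is produced by iterated 3-fold Toda brackets starting from $2$ and $\lambda_2$.
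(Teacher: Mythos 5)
Your proof is correct and takes the same route as the paper's: alternate Lemmas \ref{lem:lambdan-bracket} and \ref{lem:lambda2-bracket} to build the staircase of $\lambda_n$'s and $P^{2^n-2}\lambda_2$'s, then iterate Lemma \ref{bracketthree} (via binary expansion of the $P$-exponent) to reach a general $P^{2^{n-1}k}\lambda_n$. You spell out the recursion and the indeterminacy handling more explicitly than the paper's terse one-paragraph proof, but the underlying argument is identical.
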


\begin{proof}
 Lemmas \ref{lem:lambdan-bracket} and \ref{lem:lambda2-bracket}
alternately show that the generators $\lambda_n$
and the generators $P^{2^n-2} \lambda_2$ can be constructed
via iterated 3-fold Toda bracket starting from $2$ and $\lambda_2$.
Then Lemma \ref{bracketthree}
shows that any $P^{2^{n-1} k} \lambda_n$ can be constructed.
\end{proof}

\begin{eg} 
Suppose we wish to find a Toda bracket
decomposition for $P^{40} \lambda_3$.
Since $40 = 2^{7-2} + 2^{3-1} \cdot 4$, 
we can apply
Lemma \ref{bracketthree} 
with $m = 7$, $n = 3$, and $k = 4$ to conclude that
$P^{40} \lambda_3$ is detected by the Toda bracket
$\langle 2^{8}, \lambda_7, P^8 \lambda_3 \rangle$.
\end{eg}

\clearpage

\psset{linewidth=0.3mm}    
\psset{unit=2.6mm}

\begin{figure}
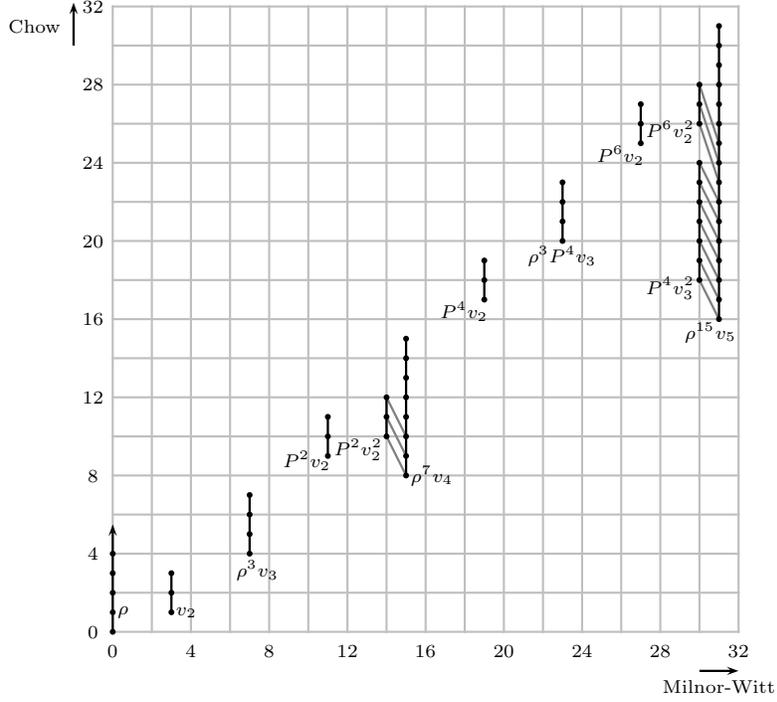


\caption{The $\eta$-inverted $\R$-motivic Adams $E_3$-page\label{fig:E3}}
\begin{pspicture}(-2,-4)(32,32.3)

\psgrid[unit=2,gridcolor=gridline,subgriddiv=0,gridlabels=0](0,0)(16,16)

\scriptsize 

\input{E3ChartUpto32data.tex}

\scriptsize 

\psline{->}(30,-2)(32,-2)
\rput(31,-2.8){Milnor-Witt}
\psline{->}(-2,30)(-2,32.2)
\rput(-4,31){Chow}

\end{pspicture}

\begin{pspicture}(30,30)(64,64.3)

\psgrid[unit=2,gridcolor=gridline,subgriddiv=0,gridlabels=0](16,16)(32,32)


\input{E3ChartUpto64data.tex}

\scriptsize 

\psline{->}(62,30)(64,30)
\rput(63,29.2){Milnor-Witt}
\psline{->}(30,62)(30,64)
\rput(28,63){Chow}

\end{pspicture}

\end{figure}

\begin{figure}
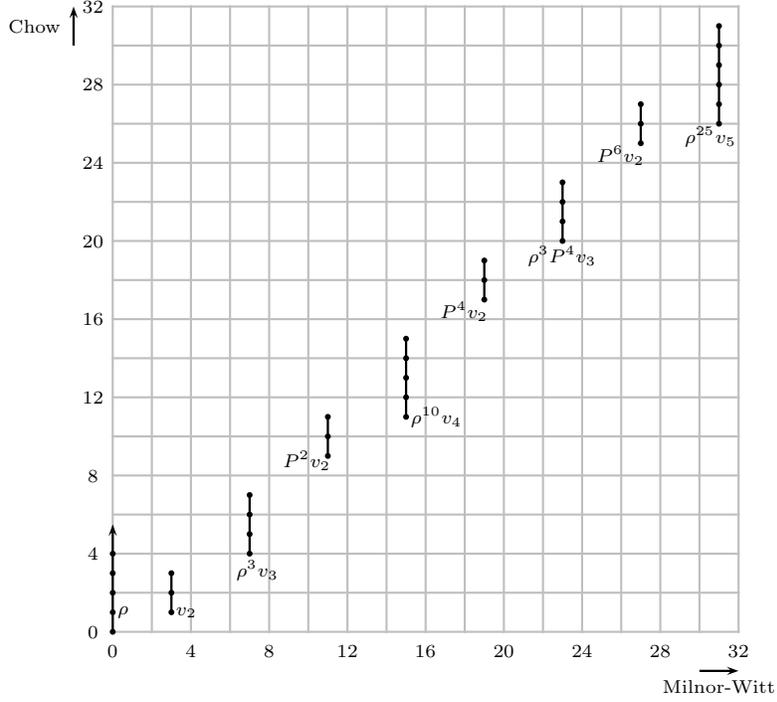


\caption{The $\eta$-inverted $\R$-motivic Adams $E_\infty$-page
\label{fig:Einfty}}
\begin{pspicture}(-2,-4)(32,32.3)

\psgrid[unit=2,gridcolor=gridline,subgriddiv=0,gridlabels=0](0,0)(16,16)

\scriptsize 

\input{EinfChartUpto32data.tex}

\scriptsize 

\psline{->}(30,-2)(32,-2)
\rput(31,-2.8){Milnor-Witt}
\psline{->}(-2,30)(-2,32)
\rput(-4,31){Chow}

\end{pspicture}

\begin{pspicture}(30,30)(64,64.3)

\psgrid[unit=2,gridcolor=gridline,subgriddiv=0,gridlabels=0](16,16)(32,32)


\input{EinfChartUpto64data.tex}

\scriptsize 

\psline{->}(62,30)(64,30)
\rput(63,29.2){Milnor-Witt}
\psline{->}(30,62)(30,64)
\rput(28,63){Chow}

\end{pspicture}

\end{figure}

\clearpage

\end{document}